\newtheorem{theorem}{Theorem}[section]
\newtheorem{definition}[theorem]{Definition}
\newtheorem{proposition}[theorem]{Proposition}
\newtheorem{corollary}[theorem]{Corollary}
\newtheorem{problem}[theorem]{Problem}
\providecommand{\pe}{\mathop{\rm pe}\nolimits}
\providecommand{\Bn}{B^n}
\begin{document}

\oddsidemargin 16.5mm
\evensidemargin 16.5mm

\thispagestyle{plain}

\title{Daisy cubes: a characterization and a generalization}
\author{Andrej Taranenko\\
\small Faculty of Natural Sciences and Mathematics, University of Maribor,\\[-0.8ex]
\small Koro\v{s}ka cesta 160, SI-2000 Maribor, Slovenia\\
\small and\\
\small Institute of Mathematics, Physics and Mechanics \\[-0.8ex]
\small Jadranska 19, SI-1000 Ljubljana, Slovenia\\[-0.8ex]
\ \\
\small \texttt{andrej.taranenko@um.si}\\
}

\maketitle

\begin{abstract}
Daisy cubes are a recently introduced class of isometric subgraphs of hypercubes $Q_n$. They are induced with intervals between chosen vertices of $Q_n$ and the vertex $0^n\in V(Q_n)$. In this paper we characterize daisy cubes in terms of an expansion procedure thus answering an open problem proposed by Klav\v{z}ar and Mollard, 2018, in the introductory paper of daisy cubes \cite{KlaMol-18}. To obtain such a characterization several interesting properties of daisy cubes are presented. For a given graph $G$ isomorphic to a daisy cube, but without the corresponding embedding into a hypercube, we present an algorithm which finds a proper embedding of $G$ into a hypercube in $O(mn)$ time. Finally, daisy graphs of a rooted graph are introduced and shown to be a generalization of daisy cubes.
\end{abstract}

\noindent \emph{Keywords.} {daisy cubes, characterization, generalization.}
\\ \noindent \emph{Mathematics Subject Classifications.} 05C75, 05C85, 68R10.

%%%%%%%%%%%%%%%%%%%%%%%%%%%%%%%%%%%%%%%%%%%%%%%%%%%%%%%

\section{Introduction}
Daisy cubes have been recently introduced by Klav\v{z}ar and Mollard \cite{KlaMol-18}. This is a new class of partial cubes that contains other well known families of cube-like graphs, e.g. Fibonacci cubes (\cite{K2012}) and Lucas cubes (\cite{MCS2001,T2013}). Since the introduction other results on daisy cubes have already appeared. Providing a connection to chemical graph theory, \v{Z}igert Pleter\v{s}ek has shown in  \cite{ZP2018} that the resonance graphs of kinky benzenoid systems are daisy cubes. In \cite{V2019} Vesel has shown that the cube-complement of a daisy cube is also a daisy cube.

In the introductory paper \cite{KlaMol-18} several nice properties and open problems about daisy cubes are presented. Among others the following problem.

\begin{problem}\cite[Problem 5.1]{KlaMol-18}\label{KlaMolP51}
Do daisy cubes admit a characterization in terms of an expansion procedure?
\end{problem}

In this paper we show that the answer to Problem \ref{KlaMolP51} is affirmative. We proceed as follows. In the continuation of this section we provide basic definitions and results needed throughout the paper. In section \ref{DaisyCubes}, in the process of providing a characterization of daisy cubes in terms of an expansion procedure, we prove several other interesting properties of daisy cubes. We continue with section \ref{findingLabellings} by providing an algorithm which for a given graph $G$ isomorphic to a daisy cube finds a proper labelling of the vertices of $G$ and thus an isometric embedding into the corresponding hypercube. The paper is concluded with a generalization of daisy cubes and some open problems.

Let $B=\{0,1\}$. A word $u$ of length $n$ over $B$ is called a binary string of length $n$ and will be denoted by $u=(u_1, u_2, \ldots, u_n) \in \Bn$, or shorter $u_1 u_2 \ldots u_n$. We will use the power notation for the concatenation of bits, for instance $0^n = 0\ldots 0\in \Bn$.

The \emph{$n$-cube} $Q_n$ is the graph with the vertex set $\Bn$, where two vertices are adjacent whenever the two binary strings differ in exactly one position. 

For two vertices $u$ and $v$ of a graph $G$, the interval $I_G(u, v)$ between $u$ and $v$ in $G$ is the set of all vertices lying on some shortest $u, v$-path, that is, $I_G(u, v) = \{w \in V(G)\ |\  d(u, v) = d(u, w) + d(w, v)\}$. The index $G$ may be omitted where the graph will be clear from the context. A subgraph $H$ of a graph $G$ is isometric if $d_H (u, v) = d_G(u, v)$ holds for any $u, v \in V(H)$. Isometric graphs of hypercubes are called \emph{partial cubes}.

A \emph{median} of a triple of vertices $u$, $v$ and $w$ of a graph $G$ is a vertex $z\in V(G)$ that lies on a shortest $u, v$-path, on a shortest $u, w$-path, and on a shortest $v, w$-path. If $G$ is connected and every triple of vertices admits a unique median, then $G$ is a \emph{median graph}. It is well known that median graphs are partial cubes (cf. \cite{HIK2011knjiga}).

Let $G$ be a connected graph with $e=xy$ and $f=uv$ two edges in $G$. We say that $e$ is in relation $\Theta$ to $f$ if $d(x,u)+d(y,v) \not = d(x,v) + d(y,u)$. $\Theta$ is reflexive and symmetric, but need not be transitive. We denote its transitive closure by $\Theta^{*}$. It was proved in \cite{W1984} that $G$ is a partial cube if and only if $G$ is bipartite and $\Theta = \Theta^{*}$.

For $X\subseteq V(G)$ we denote the subgraph of $G$ induced by the set $X$ with $\langle X\rangle_G$ or simply as $\langle X\rangle$, when $G$ is clear from the context.

A subgraph $H$ of a graph $G$ is called \emph{convex}, if it is connected and if every shortest path of $G$ between two vertices of $H$ is completely contained in $H$.

For an edge $ab$ of graph $G$ we define:
\begin{itemize}
	\item $W_{ab} = \{w \in V(G)\ |\ d(a,w) < d(b,w) \}$
	\item $W_{ba} = \{w \in V(G)\ |\ d(b,w) < d(a,w) \}$
	\item $F_{ab} = \{xy \in E(G)\ |\ x \in W_{ab} \text{ and } y \in W_{ba} \}$
	\item $U_{ab} = \{w \in W_{ab}\ |\ w \text{ is the end vertex of an edge in } F_{ab} \}$
	\item $U_{ba} = \{w \in W_{ba}\ |\ w \text{ is the end vertex of an edge in } F_{ab}\}$
\end{itemize}

Let $ab$ be an edge of a partial cube $G$ for which $U_{ab} = W_{ab}$. Then $\langle W_{ab} \rangle$ is called a \emph{peripheral subgraph} of $G$. A $\Theta$-class $E$ of a median graph $G$ is called \emph{peripheral}, if at least one of $\langle W_{ab} \rangle$ and $\langle W_{ba} \rangle$ is peripheral for $ab \in E$. $E$ is \emph{internal} if it is not peripheral.

In \cite{KlaMol-18} daisy cubes were defined as follows. Let $\leq$ be a partial order on $\Bn$ defined in the following way: $u_1 u_2 \ldots u_n \leq v_1 v_2 \ldots v_n$ if $u_i \leq v_i$ is true for all $i \in \{1, 2, \ldots, n\}$. For $X\subseteq \Bn$ the graph $Q_n(X)$ is defined as the subgraph of $Q_n$ where $$Q_n(X)=\langle \{u \in \Bn \ |\ u \leq x \text{ for some }x\in X\} \rangle.$$ The graph $Q_n(X)$ is called \emph{a daisy cube (generated by $X$)}.

As noted in \cite{KlaMol-18}, if $x,y\in X$ are such that $y\leq x$, then $Q_n(X) = Q_n(X \setminus \{y\})$. 

\begin{proposition}\label{DCisPC}\cite{KlaMol-18}
If $X\subseteq \Bn$, then $Q_n(X)$ is a partial cube.
\end{proposition}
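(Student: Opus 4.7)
The plan is to show directly that $Q_n(X)$ is isometric in $Q_n$, which is what it means to be a partial cube. So fix $u,v \in V(Q_n(X))$, and recall that $d_{Q_n}(u,v)$ equals the Hamming distance $H(u,v)=|\{i:u_i\neq v_i\}|$. Since $Q_n(X)$ is a subgraph of $Q_n$ we automatically have $d_{Q_n(X)}(u,v) \ge d_{Q_n}(u,v)$, so the task is to exhibit a $u,v$-path of length exactly $H(u,v)$ that lies entirely in $Q_n(X)$.

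My first observation is the key structural fact: the vertex set of $Q_n(X)$ is \emph{downward closed} with respect to $\leq$. Indeed, if $w\leq u$ and $u \in V(Q_n(X))$, then $u\leq x$ for some $x\in X$, and transitivity of $\leq$ gives $w\leq x$, so $w\in V(Q_n(X))$. This is really the only fact I will need.

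Next I split the coordinates on which $u$ and $v$ differ into two disjoint sets: $S_u=\{i:u_i=1,v_i=0\}$ and $S_v=\{i:u_i=0,v_i=1\}$. Note that $|S_u|+|S_v|=H(u,v)$, and let $m=u\wedge v$ be the coordinate-wise minimum. I construct the path from $u$ to $v$ in two legs. In the first leg I flip the coordinates in $S_u$ one by one, in any order; each intermediate vertex $w$ satisfies $w\leq u$, so by downward closure $w\in V(Q_n(X))$, and the leg ends at $m$. In the second leg I flip the coordinates in $S_v$ one by one; each intermediate vertex $w$ satisfies $w\leq v$, hence $w\in V(Q_n(X))$ by downward closure applied to $v$. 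Concatenating, I obtain a walk of length $|S_u|+|S_v|=H(u,v)$ in $Q_n(X)$ from $u$ to $v$.

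Combined with the trivial lower bound, this gives $d_{Q_n(X)}(u,v)=d_{Q_n}(u,v)$, proving isometry. There is no real obstacle: the only idea needed is to route through the meet $u\wedge v$, using downward closure of $V(Q_n(X))$ on each half of the path.
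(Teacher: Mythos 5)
Your argument is correct. The paper itself states this proposition without proof, citing \cite{KlaMol-18}, so there is no in-paper argument to compare against; your proof is the standard one and matches the original in spirit: downward closure of $V(Q_n(X))$ under $\leq$ plus routing a geodesic through the meet $u\wedge v$, which keeps every intermediate vertex below $u$ on the first leg and below $v$ on the second. One point worth making explicit (though it is implicit in your write-up) is that consecutive vertices on your walk differ in exactly one coordinate and both lie in $V(Q_n(X))$, so the connecting edge is present because $Q_n(X)$ is defined as an \emph{induced} subgraph of $Q_n$; with that observation the walk is genuinely a path in $Q_n(X)$ of length $H(u,v)$ and the isometry follows.
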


Several characterizations of partial cubes and median graphs are known. Before we state some, we will need the notion of expansion.

Suppose $V(G)=V_1 \cup V_2$, where $V_1 \cap V_2 \not = \emptyset$, both $\langle V_1 \rangle$ and $\langle V_2\rangle$ are isometric subgraphs of $G$, and there exists no edge of $G$ with one endpoint in $V_1\setminus V_2$ and the other in $V_2 \setminus V_1$. An \emph{expansion} of $G$ with respect to $V_1$ and $V_2$ is a graph $G'$ obtained from $G$ by the following steps:
\begin{enumerate}[(i)]
\item Replace each $v\in V_1 \cap V_2$ with vertices $v_1, v_2$, and add the edge $v_1v_2$.
\item Add edges between $v_1$ and all neighbours of $v$ in $V_1\setminus V_2$, also add edges between $v_2$ and all neighbours of $v$ in $V_2\setminus V_1$.
\item Insert the edges $v_1 u_1$ and $v_2 u_2$ if $u,v \in V_1 \cap V_2$ are adjacent in $G$.
\end{enumerate}

An expansion is called a \emph{connected expansion} if $\langle V_1 \cap V_2 \rangle$ is a connected subgraph of $G$. An expansion is called \emph{a convex expansion} if $\langle V_1 \cap V_2 \rangle$ is a convex subgraph of $G$. An expansion is \emph{peripheral} if $V_1=V(G)$. We will denote peripheral expansions by $\pe(G;V_2)$ and call them peripheral expansions of $G$ with respect to $V_2$.

The inverse operation of an expansion is called \emph{a contraction}. Note, a contraction of a partial cube is obtained by contracting the edges of a given $\Theta$-class.

The following characterization of partial cubes is well known.

\begin{theorem}\cite{chepoi-88}\label{charPCChepoi}
A graph $G$ is a partial cube if and only if $G$ can be obtained from the one-vertex graph by a sequence of expansions.
\end{theorem}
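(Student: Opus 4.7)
The plan is to prove the equivalence by two separate inductions: one on the number of expansion steps for the ``if'' direction, and one on the number of $\Theta$-classes for the ``only if'' direction.

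For the sufficiency direction, the base case is $K_1$, which trivially embeds isometrically into $Q_0$. Suppose inductively that $G$ is a partial cube with an isometric embedding $\varphi\colon V(G)\to \Bn$, and let $G'$ be an expansion of $G$ with respect to $V_1,V_2$. I define $\varphi'\colon V(G')\to B^{n+1}$ by appending a new coordinate: $0$ for every vertex lying on the ``$V_1$-side'' (namely the elements of $V_1\setminus V_2$ together with the copy $v_1$ for each $v\in V_1\cap V_2$) and $1$ for every vertex on the ``$V_2$-side''. The isometry check splits on whether two given vertices of $G'$ lie on the same side or on opposite sides: same-side is immediate from the fact that $\langle V_i\rangle$ is an isometric subgraph of $G$; opposite-side uses the no-cross-edges hypothesis, which forces any shortest path in $G'$ to traverse some split edge $v_1v_2$, and then a short calculation shows the new coordinate contributes exactly the extra $+1$ that matches the Hamming distance.

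For necessity, let $G$ be a partial cube; if $G=K_1$ we are done. Otherwise pick any $\Theta$-class $F$ and form the contraction $G/F$. Since $\Theta=\Theta^{*}$ on a partial cube, deleting the coordinate corresponding to $F$ from the hypercube embedding of $G$ yields an isometric embedding of $G/F$, so $G/F$ is itself a partial cube with one fewer $\Theta$-class and is obtained from $K_1$ by expansions by the inductive hypothesis. It remains to exhibit $G$ as an expansion of $G/F$. Let $\pi\colon V(G)\to V(G/F)$ be the quotient map and fix $ab\in F$; set $V_1=\pi(W_{ab})$ and $V_2=\pi(W_{ba})$. The three expansion conditions follow from standard partial cube facts: $\langle W_{ab}\rangle$ and $\langle W_{ba}\rangle$ are isometric (indeed convex) subgraphs of $G$, which pushes down to $V_1,V_2$ being isometric subgraphs of $G/F$; their intersection is precisely $\pi(U_{ab})=\pi(U_{ba})$; and the absence of edges between $V_1\setminus V_2$ and $V_2\setminus V_1$ is because every edge of $G$ joining $W_{ab}$ to $W_{ba}$ belongs to $F$ by definition of a $\Theta$-class, and is therefore contracted. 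Reversing the contraction of $F$ then reproduces $G$ as the expansion of $G/F$ with respect to $(V_1,V_2)$.

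The main obstacle is the necessity direction, and within it the careful verification that $\pi(W_{ab})$ and $\pi(W_{ba})$ satisfy the expansion axioms: both the isometricity and the absence of cross-edges rely crucially on $F$ being a full $\Theta$-class, so that $F_{ab}$ is exactly the cut between $W_{ab}$ and $W_{ba}$. The cleanest route is through the hypercube coordinate interpretation, where contracting $F$ corresponds to deleting one coordinate and the sets $V_1,V_2$ become the images of the two half-cubes on either side of that coordinate. One should note that $\langle V_1\cap V_2\rangle$ need not be connected or convex in $G/F$, which is consistent with the theorem characterizing \emph{all} partial cubes rather than the more restrictive subclasses (median graphs, for instance, would correspond to convex expansions).
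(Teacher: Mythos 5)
This is Theorem~\ref{charPCChepoi} in the paper, which is quoted from Chepoi's 1988 work and given \emph{without proof}, so there is no in-paper argument to compare against; your proposal has to stand on its own. On its own terms it is essentially the standard proof of the expansion theorem and the overall structure is sound. Two places deserve more explicit care than your sketch gives them. First, in the sufficiency direction the opposite-side case needs both an upper and a lower bound: the lower bound $d_{G'}(u,w)\geq d_G(\bar u,\bar w)+1$ follows from your split-edge observation, but for the matching upper bound you must note that, because there are no edges between $V_1\setminus V_2$ and $V_2\setminus V_1$, every shortest $\bar u,\bar w$-path in $G$ passes through some $z\in V_1\cap V_2$, and then reroute its two halves inside $\langle V_1\rangle$ and $\langle V_2\rangle$ (using isometry of both) before lifting and inserting the edge $z_1z_2$. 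Second, in the necessity direction, reversing the contraction reproduces $G$ only if, for adjacent $z,y\in V_1\cap V_2$ with preimages $u,u'$ and $v,v'$, one has $uv\in E(G)$ if and only if $u'v'\in E(G)$ (since the expansion inserts \emph{both} $z_1y_1$ and $z_2y_2$); this is the standard fact that $F_{ab}$ induces an isomorphism between $\langle U_{ab}\rangle$ and $\langle U_{ba}\rangle$, which is immediate in the coordinate picture you advocate but should be stated, as should the fact $F_{ab}=F$ (the cut equals the $\Theta$-class) that you do invoke. Your closing remark that $\langle V_1\cap V_2\rangle$ need not be connected or convex is a correct and relevant observation, consistent with how the paper distinguishes general expansions from the convex ones of Theorem~\ref{muldersExpThm}.
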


Median graphs are also characterized in a manner similar to Theorem  \ref{charPCChepoi}. This characterization is known as the Mulder's Convex Expansion Theorem:

\begin{theorem}\cite[Theorem 12.8]{HIK2011knjiga}\label{muldersExpThm}
A graph $G$ is a median graph if and only if it can be obtained from the one-vertex graph by a sequence of convex expansions.
\end{theorem}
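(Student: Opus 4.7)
\medskip
\noindent\textbf{Proof plan.} The plan is to prove the two implications separately: that any convex expansion of a median graph is again a median graph, and that every median graph admits a contraction to a smaller median graph whose reversal is a convex expansion. Both steps follow the template of the classical proof of Theorem~\ref{charPCChepoi}, with the convexity of $\langle V_1 \cap V_2 \rangle$ as the crucial extra ingredient that controls uniqueness of medians.

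For the forward direction, let $G$ be a median graph and let $G'$ be a convex expansion of $G$ with respect to $V_1, V_2$. By Theorem~\ref{charPCChepoi}, $G'$ is already a partial cube, hence bipartite, so it suffices to establish a unique median for every triple. Given $x', y', z' \in V(G')$, I would project them to $x, y, z \in V(G)$ by collapsing the new $\Theta$-class (the edges $v_1 v_2$ for $v \in V_1 \cap V_2$), take their unique median $m$ in $G$, and then lift $m$ to whichever copy ($V_1$-side or $V_2$-side) is consistent with the three projections. The delicate case is when $m \in V_1 \cap V_2$ and the triples are split across the two sides: convexity of $\langle V_1 \cap V_2 \rangle$ is exactly what guarantees that the necessary gateway through $V_1 \cap V_2$ can be rerouted through $m$ itself, producing a single median in $G'$.

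For the reverse direction, I would induct on $|V(G)|$. The base $|V(G)|=1$ is vacuous. For the inductive step, pick any edge $ab$ and let $F = F_{ab}$ be its $\Theta$-class. Using standard median-graph facts, namely that $\langle W_{ab}\rangle$, $\langle W_{ba}\rangle$, and the ``boundary'' $\langle U_{ab}\rangle$ are all convex in $G$, I would verify that the contraction $G/F$ is still a median graph and that $G$ is recovered from $G/F$ by the expansion whose $V_1, V_2$ are the images of $W_{ab}, W_{ba}$ and whose intersection $V_1 \cap V_2$ is the image of $U_{ab}$. Convexity of this intersection in $G/F$ is inherited from the convexity of $\langle U_{ab}\rangle$ in $G$, closing the induction.

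The main obstacle is establishing the convexity of $\langle U_{ab}\rangle$ in a median graph, which powers both directions. The cleanest argument uses uniqueness of medians: if $u, u' \in U_{ab}$ and some $v \in I(u, u')$ lay outside $U_{ab}$, one constructs a second candidate median of the triple $\{u, u', a\}$ using the neighbours of $u, u'$ across $F_{ab}$, contradicting uniqueness. Once this is in hand, the inductive lift of medians in the forward direction and the verification that the reconstruction from $G/F$ really yields $G$ reduce to routine bookkeeping with $\Theta$-classes.
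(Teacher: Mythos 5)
The paper offers no proof of this statement: it is Mulder's Convex Expansion Theorem, imported verbatim from \cite[Theorem 12.8]{HIK2011knjiga} and used as a known tool, so there is no internal argument to compare yours against. Judged on its own, your outline follows the classical route (essentially Mulder's original proof as presented in the Handbook): induct on the number of expansions for one direction, and for the other contract an arbitrary $\Theta$-class and check that the pieces $W_{ab}$, $W_{ba}$, $U_{ab}$ are convex so that the inverse operation is a convex expansion. The structure is right, and you correctly identify that the entire weight rests on convexity of $\langle U_{ab}\rangle$ (together with convexity of $W_{ab}$ and $W_{ba}$, which you should state explicitly, since it is what lets you place the projected median on the correct side).

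The one step you make concrete is, however, not quite the working argument. To show $\langle U_{ab}\rangle$ is convex, the standard move is not to exhibit a second median of $\{u,u',a\}$: take $u,u'\in U_{ab}$, a vertex $v\in I(u,u')$ (which lies in $W_{ab}$ by convexity of $W_{ab}$), let $\bar u,\bar u'\in U_{ba}$ be the neighbours of $u,u'$ across $F_{ab}$, and form the median $m$ of the triple $v,\bar u,\bar u'$. Distance bookkeeping ($d(v,\bar u)=d(v,u)+1$, $d(v,\bar u')=d(v,u')+1$, $d(\bar u,\bar u')=d(u,u')$) forces $m$ to be a neighbour of $v$ lying in $W_{ba}$, whence $v\in U_{ab}$ directly; no contradiction with uniqueness of a median of $\{u,u',a\}$ is involved, and it is not clear how that particular triple would produce one. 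The remaining steps you defer to ``routine bookkeeping'' (the well-definedness and uniqueness of the lifted median in the forward direction, and the verification that $G$ is recovered from $G/F$) are genuinely where the case analysis lives, so as written this is a credible plan rather than a proof; for the present paper that is immaterial, since the theorem is cited, not proved.
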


In the next section we will characterize daisy cubes in the language of expansions, similar to Theorem \ref{charPCChepoi} and Theorem \ref{muldersExpThm}. For this, we will need the following result.

\begin{proposition}\cite{KlaMol-18}\label{contractionIsDC}
If $G = Q_n(X)$ is a daisy cube, then a contraction of $G$ is a daisy cube.
\end{proposition}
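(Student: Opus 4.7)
My plan is to exhibit the contraction concretely as a smaller daisy cube. Since $G = Q_n(X)$ is a partial cube isometrically embedded in $Q_n$, a standard consequence of the isometric embedding is that every $\Theta$-class of $G$ coincides with the restriction to $E(G)$ of one of the canonical cube classes $E_i = \{uv \in E(Q_n) : u_i \neq v_i\}$. I would fix such an index $i$, let $\pi : \Bn \to \Bnn$ be the projection that deletes coordinate $i$, and set $X' = \pi(X) \subseteq \Bnn$. The whole task then reduces to proving that contracting this $\Theta$-class of $G$ yields exactly $Q_{n-1}(X')$, which is a daisy cube by definition.

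First I would check that the vertex sets agree. Contracting identifies $u, u' \in V(G)$ precisely when they differ only in position $i$, so the vertex set of the contraction is canonically $\pi(V(G))$. The inclusion $\pi(V(G)) \subseteq V(Q_{n-1}(X'))$ is immediate: if $u \leq x$ for some $x \in X$, then $\pi(u) \leq \pi(x) \in X'$. For the reverse inclusion, given $w \leq y$ with $y = \pi(x)$, let $u$ be the lift of $w$ obtained by inserting $0$ at position $i$; then $u \leq x$ coordinate-wise (using $0 \leq x_i$), so $u \in V(G)$ with $\pi(u) = w$.

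Next I would match edges. An edge in the contraction corresponds to an edge of $G$ outside the contracted $\Theta$-class, namely an edge of $Q_n(X)$ whose endpoints differ in some coordinate $j \neq i$; its projection is an edge of $Q_{n-1}$ between two vertices of $V(Q_{n-1}(X'))$, hence an edge of $Q_{n-1}(X')$. Conversely, if $ww' \in E(Q_{n-1}(X'))$ differs in coordinate $j \neq i$, lifting both endpoints by inserting $0$ at position $i$ produces two vertices in $V(G)$ (by the preceding paragraph) that are adjacent in $G$ since they differ only in coordinate $j$, giving the required edge in the contraction.

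The only point that deserves care is the initial identification of a $\Theta$-class of $G$ with the restriction of some $E_i$ to $E(G)$; once this is in place, the remainder is an almost mechanical verification built on two elementary observations, namely that $\pi$ preserves the order $\leq$, and that any vector in $\Bnn$ may be lifted to $\Bn$ by padding with a $0$ at position $i$ without violating membership in the down-set generated by $X$. I therefore do not anticipate a genuine obstacle, only bookkeeping around the correspondence between coordinates and $\Theta$-classes.
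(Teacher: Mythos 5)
Your proof is correct. Note that the paper itself gives no proof of this proposition -- it is imported verbatim from \cite{KlaMol-18} -- so there is nothing in the source to compare against; your argument (identify the $\Theta$-class with a coordinate class $E_i$ via the isometric embedding, delete coordinate $i$, and check that the down-set and adjacency structure are preserved under projection and $0$-padding) is the standard one and is essentially how the result is established in the cited reference. The same coordinate-deletion technique reappears in this paper's own proof of Proposition~\ref{WisDC}, and the one step you flag as needing care (that $\Theta$-classes of an isometric subgraph of $Q_n$ are exactly the restrictions of the coordinate classes) is a routine consequence of distances in $G$ being Hamming distances.
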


\section{Characterization}\label{DaisyCubes}

Let $G$ be a graph isomorphic to a daisy cube $Q_h(X)$. There can be more than one isometric embedding of $G$ into the hypercube $Q_h$ (each such embedding assigns corresponding  labelling to vertices of $G$). Let $X_G\subseteq B^h$ be the set of labels of the vertices of $G$ assigned by an isometric embedding of $G$ into $Q_h$. We say that $G$ has a \emph{proper labelling} if $G$ is isomorphic to $Q_h(X_G)$. Otherwise the labelling is \emph{improper}. 

In Figure \ref{figPropera} the graph $G$ is isomorphic to the daisy cube $Q_3(\{011, 100\})$. In Figure \ref{figProperb} we see two other embeddings of the graph $G$ into the hypercube $Q_3$. The embedding shown with thick dashed lines is defined by: $u_1 \mapsto 000$, $u_2 \mapsto 001$, $u_3 \mapsto 101$, $u_4 \mapsto 100$ and $u_5 \mapsto 010$. This embedding assigns a proper labelling to the vertices of $G$. The embedding shown with thick solid lines is defined by: $u_1 \mapsto 110$, $u_2 \mapsto 010$, $u_3 \mapsto 011$, $u_4 \mapsto 111$ and $u_5 \mapsto 100$, corresponds to an improper labelling (e.g. $101 \leq 111$ and $101$ is not a label of any vertex in $G$).

\begin{figure}[!ht]
	\centering
\begin{subfigure}[t]{0.4\linewidth}
\begin{tikzpicture}
\tikzstyle{rn}=[circle,fill=black,draw, inner sep=0pt, minimum size=5pt]
\tikzstyle{every node}=[font=\footnotesize]  
		\node [style=rn] (0) at (0, 0) [label=below:$u_1$] {};

		\node [style=rn] (3) at (0, 1) [label=above:$u_2$] {};
		\node [style=rn] (4) at (1, 1) [label=above:$u_3$] {};
		\node [style=rn] (5) at (1, 0) [label=below:$u_4$] {};
		\node [style=rn] (7) at (-1, 0) [label=below:$u_5$] {};
		
		 \draw(0)--(5);
		 \draw(7)--(0)--(3);		 
		 \draw(3)--(4);
		 \draw(5)--(4);	
			 
		 %\node [right,font=\normalsize] (g1) at (1.4, 0) {$G$};
\end{tikzpicture}
		\caption{A graph $G$ isomorphic to a daisy cube.}
		\label{figPropera}
		\end{subfigure}\qquad
		\begin{subfigure}[t]{0.4\linewidth}
\begin{tikzpicture}
\tikzstyle{rn}=[circle,fill=black,draw, inner sep=0pt, minimum size=5pt]
\tikzstyle{every node}=[font=\footnotesize]  
		\node [style=rn] (0) at (0, 0) [label=below:$000$] {};
		\node [style=rn] (1) at (0, 1) [label=above:$001$] {};
		\node [style=rn] (2) at (1, 1) [label=above:$011$] {};
		\node [style=rn] (3) at (1, 0) [label=below:$010$] {};
		
		\node [style=rn] (4) at (-1, -1) [label=below:$100$] {};
		\node [style=rn] (5) at (-1, 2) [label=above:$101$] {};
		\node [style=rn] (6) at (2, 2) [label=above:$111$] {};
		\node [style=rn] (7) at (2, -1) [label=below:$110$] {};

 		 \draw[dotted](0)--(1)--(2)--(3)--(0); 		 
 		 \draw[dotted](4)--(5)--(6)--(7)--(4);		 
		 \draw[dotted](0)--(4);
		 \draw[dotted](1)--(5);
		 \draw[dotted](2)--(6);
		 \draw[dotted](3)--(7);	
		 
		 \draw[dashed,line width=2pt](0)--(1)--(5)--(4)--(0);
		 \draw[dashed,line width=2pt](0)--(3);
		 
		 \draw[solid,line width=2pt](2)--(3)--(7)--(6)--(2);
		 \draw[solid,line width=2pt](4)--(7);
			 
		 %\node [right,font=\normalsize] (g1) at (1.4, 0) {$G$};
\end{tikzpicture}
		\caption{Two embeddings of $G$ into the hypercube $Q_3$.}
		\label{figProperb}
		\end{subfigure}
		\caption{Example of a proper and a improper isometric embedding.}
		\label{figProper}
\end{figure}
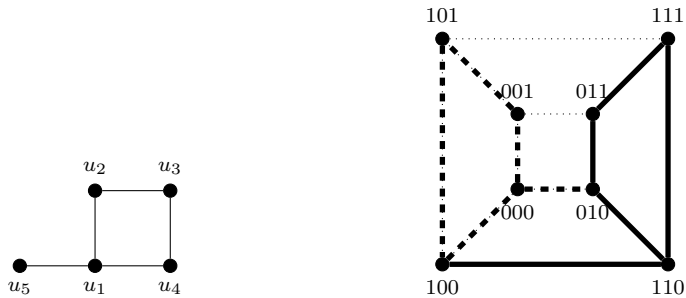

Since the embedding of the daisy cube $Q_h(X)$ induced by the definition of a daisy cube gives a proper labelling of its vertices, from here on we can always assume that labellings considered in graphs isomorphic to a daisy cube are proper labellings. Later on, we will also provide an algorithms which for a given unlabelled graph $G$ isomorphic to some daisy cube assigns a proper labelling to the vertices of $G$.

We now continue with results leading to a characterization of daisy cubes.

\begin{proposition}\label{thetaIsPeripheral}
Every $\Theta$-class of a daisy cube $G$ is peripheral.
\end{proposition}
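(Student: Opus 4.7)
The plan is to exploit the intrinsic identification between $\Theta$-classes of a partial cube and coordinate positions in the hypercube, combined with the key closure property of $V(Q_n(X))$ under the partial order $\leq$.

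First I would recall that, since $G=Q_n(X)$ is a partial cube (Proposition \ref{DCisPC}) isometrically embedded in $Q_n$, its $\Theta$-classes correspond exactly to the coordinate directions: two edges of $G$ are in relation $\Theta$ iff they flip the same coordinate $i\in\{1,\dots,n\}$. So fix a $\Theta$-class $E$, let $i$ be the coordinate it flips, and pick any edge $ab\in E$. Without loss of generality assume $a_i=0$ and $b_i=1$; then
\[
W_{ab}=\{u\in V(G)\mid u_i=0\},\qquad W_{ba}=\{u\in V(G)\mid u_i=1\}.
\]

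The main step is to show that $\langle W_{ba}\rangle$ is a peripheral subgraph by proving $U_{ba}=W_{ba}$. Take any $u\in W_{ba}$, so $u_i=1$, and let $u'$ be obtained from $u$ by flipping the $i$-th coordinate to $0$. Then $u'\leq u$ in the partial order on $\Bn$. Since $u\in V(G)$, there exists $x\in X$ with $u\leq x$, hence $u'\leq x$ as well, so $u'\in V(Q_n(X))=V(G)$. As $u'u$ differs from $u$ only in coordinate $i$, the edge $uu'$ lies in $F_{ab}$, witnessing $u\in U_{ba}$. Thus $U_{ba}=W_{ba}$, so $\langle W_{ba}\rangle$ is peripheral and therefore $E$ is peripheral.

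There is essentially no obstacle; the only subtlety is noticing that the argument works on the ``$1$-side'' $W_{ba}$ rather than on $W_{ab}$. Indeed, the symmetric attempt on $W_{ab}$ would require flipping a $0$ to a $1$, which need not preserve membership in $V(Q_n(X))$ (nothing forces an upward extension to remain below some $x\in X$). So the downward closure built into the definition of a daisy cube gives the peripheral property on exactly one side of every $\Theta$-class, which is all that is needed.
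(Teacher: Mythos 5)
Your proof is correct and follows essentially the same route as the paper: fix the coordinate $i$ of the $\Theta$-class, take any $u\in W_{ba}$, flip coordinate $i$ to get $u'\leq u$, and use the downward closure of $V(Q_n(X))$ to conclude $u'\in V(G)$ and hence $U_{ba}=W_{ba}$. The closing remark about why the argument must be run on the $1$-side is a nice observation but does not change the substance.
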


\begin{proof}
Let $G$ be a daisy cube. Consider an arbitrary edge $e=ab$ of $G$. Its endpoints differ in exactly one position, say position $i$. We choose the notation such that the endpoint of $e$ with 0 at position $i$ is denoted by $a$, and the other endpoint is denoted by $b$. Let $E=\{ f\ |\ f\in E(G) \text{ and } e \Theta f\}$. We claim that $E$ is peripheral with $U_{ba} = W_{ba}$. 

Since $G$ is a partial cube, $W_{ab}$ and $W_{ba}$ partition $V(G)$ into the sets that contain all vertices of $G$ with 0 at position $i$ and all vertices of $G$ with 1 at position $i$, respectively.
Let $x:=b_1 b_2 \ldots b_{i-1} 1 b_{i+1} \ldots b_n \in W_{ba}$ be arbitrarily chosen. Let $y:=b_1 b_2 \ldots b_{i-1} 0 b_{i+1}\ldots b_n$. Clearly $y \in V(G)$, since $y \leq x$, and $y \in W_{ab}$, since it has a 0 at position $i$. The vertices $x$ and $y$ differ in exactly one bit (the one at position $i$), therefore  $xy \in E(G)$. This means that every vertex of $W_{ba}$ is adjacent to a vertex in $W_{ab}$. From definition of $U_{ba}$ it follows that $W_{ba} = U_{ba}$.
\end{proof}

\begin{proposition}\label{everyThetaHas0}
Every $\Theta$-class of a daisy cube $G$ contains an edge with the vertex $0^n$ as an endpoint.
\end{proposition}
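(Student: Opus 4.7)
The plan is to exploit the downward-closed structure of $V(Q_n(X))$ together with the fact that $G$ is an induced subgraph of $Q_n$. Because $G$ is a partial cube embedded in $Q_n$, each $\Theta$-class consists precisely of the edges whose endpoints differ in one fixed coordinate. So I may index the $\Theta$-classes by the coordinates they flip and denote the class flipping position $i$ by $E_i$.

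Fix an arbitrary $\Theta$-class $E_i$ and pick any edge $ab \in E_i$, labelled so that $b$ has a $1$ in position $i$ (and $a$ has a $0$ there). Since $b \in V(G) = V(Q_n(X))$, by definition of a daisy cube there exists some $x \in X$ with $b \leq x$; in particular $x_i = 1$, because $b_i = 1$.

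Now let $y \in \Bn$ be the ``unit'' string with $y_i = 1$ and $y_j = 0$ for all $j \neq i$. Since $x_i = 1$ and $y$ has $0$ elsewhere, clearly $y \leq x$, so $y \in V(G)$. Also $0^n \leq x$, so $0^n \in V(G)$. The strings $0^n$ and $y$ differ only in coordinate $i$, so they are adjacent in $Q_n$, and because $Q_n(X)$ is the \emph{induced} subgraph of $Q_n$ on these strings, the edge $0^n y$ belongs to $E(G)$. By construction this edge flips position $i$, hence lies in $E_i$, and it has $0^n$ as an endpoint.

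I do not foresee a real obstacle: the argument only uses that $V(Q_n(X))$ is downward-closed under $\leq$ (so once a ``$1$'' appears in some coordinate $i$ among the vertices, the minimal witness $y$ having only that single $1$ is automatically in $V(G)$) and that daisy cubes are induced subgraphs of $Q_n$. The sole point worth stating carefully is that every $\Theta$-class of the partial cube $G$ flips some fixed coordinate of the ambient $Q_n$, which then lets us extract the coordinate index $i$ from the chosen class.
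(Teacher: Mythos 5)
Your proof is correct and follows essentially the same route as the paper: pick an edge $ab$ in the class with $b_i=1$, observe that $0^n$ and $0^{i-1}10^{n-i}$ are both vertices of $G$ by downward-closure, and note that the edge between them lies in the same $\Theta$-class. The only cosmetic difference is that you route the comparison through a generator $x\in X$ and invoke the coordinate description of $\Theta$-classes, whereas the paper compares directly with $b$ and verifies $ab\,\Theta\,uv$ from the definition.
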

\begin{proof}
Let $ab\in E(G)$ be arbitrary, with notation chosen such that the endpoint $a$ has 0 at position $i$ and the endpoint $b$ has 1 at position $i$. Consider the vertices $u=0^n$ and $v=0^{i-1} 1 0^{n-i}$. Since $u\leq b$ and $v \leq b$, both $u$ and $v$ are vertices of $G$. It follows that $uv\in E(G)$. It is trivial to verify that $ab \Theta uv$.
\end{proof}

The following Corollary immediately follows from Proposition \ref{DCisPC} and Proposition \ref{everyThetaHas0}.

\begin{corollary}\label{0isDelta}
If $G$ is a daisy cube, then the vertex $0^n$ is a vertex of degree $\Delta(G)$.
\end{corollary}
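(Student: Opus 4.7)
The plan is to exploit the fundamental fact that in a partial cube, any two edges sharing a common endpoint lie in distinct $\Theta$-classes. This is immediate from the definition of $\Theta$: if $e = vx$ and $f = vy$ share vertex $v$, then taking $x$ and $y$ as the ``other'' endpoints gives $d(v,v) + d(x,y) = d(x,y)$ and $d(v,y) + d(x,v) = d(x,y) + 1 \ne d(x,y)$ in a bipartite graph, so $e$ and $f$ are not in relation $\Theta$. Consequently, for any vertex $w$ of a partial cube $G$, the degree $\deg(w)$ is at most the total number of $\Theta$-classes of $G$.

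Now I would combine this observation with Proposition \ref{everyThetaHas0}. By that proposition, every $\Theta$-class of $G$ contains an edge incident to $0^n$. Together with the preceding paragraph, this means the edges incident to $0^n$ are in pairwise distinct $\Theta$-classes, and they hit every $\Theta$-class. Therefore $\deg(0^n)$ equals the total number of $\Theta$-classes of $G$. Combined with the upper bound $\deg(w) \le$ (number of $\Theta$-classes) for every vertex $w$, we obtain $\deg(0^n) \ge \deg(w)$ for all $w \in V(G)$, so $\deg(0^n) = \Delta(G)$.

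The main (minor) obstacle is simply making the observation that edges sharing an endpoint in a partial cube lie in different $\Theta$-classes explicit, since this is not formally stated earlier in the excerpt; everything else is a direct bookkeeping argument. Given the brevity, the proof essentially reduces to chaining the two cited propositions through this one standard partial-cube fact.
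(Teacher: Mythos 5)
Your overall strategy is exactly the one the paper intends: Corollary \ref{0isDelta} is stated to follow from Proposition \ref{DCisPC} and Proposition \ref{everyThetaHas0}, and the missing glue is precisely the standard fact that in a partial cube two edges sharing an endpoint lie in distinct $\Theta$-classes, giving $\deg(w)\le{}$(number of $\Theta$-classes) for every vertex $w$ while $\deg(0^n)$ attains that bound. However, your inline justification of that standard fact is wrong as written, in two ways. First, the arithmetic: with $e=vx$, $f=vy$ and the pairing you chose, the two quantities to compare are $d(v,v)+d(x,y)=d(x,y)$ and $d(v,y)+d(x,v)=1+1=2$; since $G$ is bipartite and $x\neq y$ are both neighbours of $v$, we have $d(x,y)=2$, so the two sums are \emph{equal}, not differing by $1$ as you claim. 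Second, and more importantly, the logic is inverted: by the paper's definition, $e\,\Theta\,f$ holds precisely when the two sums are \emph{unequal}, so if your computation were correct you would have established $e\,\Theta\,f$ --- the opposite of what you need. The correct reading is that bipartiteness forces equality of the two sums, hence $e$ and $f$ are \emph{not} in relation $\Theta$, and since $\Theta=\Theta^{*}$ in a partial cube they lie in distinct $\Theta$-classes. With that repair, the rest of your bookkeeping (every $\Theta$-class contains an edge at $0^n$ by Proposition \ref{everyThetaHas0}, distinct edges at $0^n$ lie in distinct classes, hence $\deg(0^n)$ equals the number of $\Theta$-classes and majorizes every degree) is correct and completes the proof.
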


\begin{proposition}\label{WisDC}
For any edge $ab$ of a daisy cube $G=Q_n(X)$ the graphs $\langle W_{ab} \rangle$ and $\langle W_{ba} \rangle$ are daisy cubes.
\end{proposition}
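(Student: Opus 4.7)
The plan is to exploit the fact that the vertex set of a daisy cube is downward closed in $(B^n,\leq)$, and that the sets $W_{ab}$ and $W_{ba}$ are determined by a single coordinate. So I would first fix notation as in the proof of Proposition \ref{thetaIsPeripheral}: say the edge $ab$ differs in position $i$, with $a_i=0$ and $b_i=1$. Then $W_{ab}=\{u\in V(G)\mid u_i=0\}$ and $W_{ba}=\{u\in V(G)\mid u_i=1\}$.

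Next I would introduce the projection $\pi_i:B^n\to B^{n-1}$ that deletes coordinate $i$. Since $\pi_i$ restricted to strings with a fixed bit in position $i$ is a bijection onto $B^{n-1}$ and preserves adjacency in the hypercubes, $\langle W_{ab}\rangle$ is isomorphic to the subgraph of $Q_{n-1}$ induced on $\pi_i(W_{ab})$, and similarly for $W_{ba}$. The whole proof then reduces to showing that $\pi_i(W_{ab})$ and $\pi_i(W_{ba})$ are downward closed in $(B^{n-1},\leq)$; once this is done, each induced subgraph equals $Q_{n-1}$ applied to the corresponding set of maximal elements, and is a daisy cube by definition.

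The key routine check is downward closedness. For $W_{ab}$: given $v\in\pi_i(W_{ab})$, lift to $u=(v_1,\dots,v_{i-1},0,v_i,\dots,v_{n-1})\in W_{ab}\subseteq V(G)$; for any $v'\leq v$ in $B^{n-1}$, the lift $u'=(v'_1,\dots,v'_{i-1},0,v'_i,\dots,v'_{n-1})$ satisfies $u'\leq u$ coordinatewise, so $u'\in V(G)$ because $V(G)=Q_n(X)$ is downward closed, and $u'_i=0$ puts $u'$ in $W_{ab}$, yielding $v'=\pi_i(u')\in\pi_i(W_{ab})$. The argument for $W_{ba}$ is identical but with a $1$ reinserted at position $i$; the point is that inserting a $1$ at position $i$ in both $v$ and $v'$ still produces strings in $\leq$-relation, so downward closedness of $V(G)$ again does the work.

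There is no real obstacle here; the only thing to be careful about is the bookkeeping of the coordinate $i$ during projection and lifting, and the observation that $\langle W_{ab}\rangle_G=\langle W_{ab}\rangle_{Q_n}$ because $G$ is an induced subgraph of $Q_n$, which justifies identifying the induced subgraphs with daisy cubes on $B^{n-1}$ via $\pi_i$.
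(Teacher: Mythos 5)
Your proof is correct, but it handles the $\langle W_{ab}\rangle$ half differently from the paper. For $\langle W_{ba}\rangle$ your argument is essentially the paper's: delete coordinate $i$, observe the deletion map is an adjacency-preserving bijection, and verify downward closedness of the image by reinserting the $1$ at position $i$ into both a string and its $\leq$-predecessor and appealing to the downward closedness of $V(G)$. For $\langle W_{ab}\rangle$, however, the paper does not argue directly: it first proves that every $\Theta$-class is peripheral (Proposition \ref{thetaIsPeripheral}), concludes that $\langle W_{ab}\rangle$ is the contraction of $G$ along the $\Theta$-class of $ab$, and then invokes the cited result that a contraction of a daisy cube is a daisy cube (Proposition \ref{contractionIsDC}). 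You instead run the same projection-and-lift argument with a $0$ reinserted at position $i$, which works because $u'\leq u$ forces $u'_i=0$ whenever $u_i=0$; indeed $W_{ab}$ is itself downward closed in $B^n$, so this half is if anything easier than the $W_{ba}$ half. The net effect is that your proof is uniform and self-contained, needing neither the peripherality proposition nor the contraction result, whereas the paper's version buys brevity for the $W_{ab}$ case by reusing machinery it has already established and cited. Both are valid; your bookkeeping of the coordinate $i$, the identification $\langle W_{ab}\rangle_G=\langle W_{ab}\rangle_{Q_n}$, and the observation that a downward-closed subset of $B^{n-1}$ induces the daisy cube generated by its maximal elements are all exactly the points that need checking, and you check them.
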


\begin{proof}
Let $e=ab$ be an arbitrary edge of $G$, with notation chosen such that the endpoint $a$ has 0 at position $i$ and the endpoint $b$ has 1 at position $i$. Then the set $W_{ab}$ consists of all vertices of $G$ with 0 at position $i$ and $W_{ba}$ consists of all vertices of $G$ with a 1 at position $i$. 

Let $E=\{ f\ |\ f\in E(G) \text{ and } e \Theta f\}$. Using Proposition \ref{thetaIsPeripheral} it follows that $\langle W_{ab} \rangle$ is obtained by contracting the edges of the $\Theta$-class $E$. From Proposition \ref{contractionIsDC} it follows that $\langle W_{ab} \rangle$ is a daisy cube.

To prove that $\langle W_{ba} \rangle$ is a daisy cube consider the following. Let $W'_{ba} = \{ x_1 \ldots x_{i-1} x_{i+1} \ldots x_n  \ |\ x_1\ldots x_n \in W_{ba}\}$. Since all vertices of $W_{ba}$ have 1 at position $i$, it is easy to see that the function $r$ that removes the bit at position $i$ from a binary string of length $n$ is a bijection between $W_{ba}$ and $W'_{ba}$. 

Let $u=u_1 u_2 \ldots u_{n-1} \in W'_{ba}$ be arbitrarily chosen. We claim that any binary string $v$ of length $n-1$, such that $v \leq u$, belongs to $W'_{ba}$. Towards contradiction suppose there is a binary string $w=w_1 w_2 \ldots w_{n-1}$ with $w \leq u$ and $w \not\in W'_{ba}$. Since $u_1 \ldots u_{i-1} 1 u_{i} \ldots u_{n-1} \in W_{ba}$ and $w_1 \ldots w_{i-1} 1 w_{i} \ldots w_{n-1} \leq u_1 \ldots u_{i-1} 1 u_{i} \ldots u_{n-1}$ it follows that $w_1 \ldots w_{i-1} 1 w_{i} \ldots w_{n-1} \in W_{ba}$. But then $w \in W'_{ba}$, a contradiction. Therefore for any $x\in W'_{ba}$ the interval $I_{Q_{n-1}}(x, 0^{n-1}) \subseteq W'_{ba}$. It follows that $\langle W'_{ba} \rangle_{Q_{n-1}}$ is a daisy cube. The assertion follows from the fact that $r$ defines an isomorphism between 
$\langle W_{ba} \rangle_{Q_n}$ and $\langle W'_{ba}\rangle_{Q_{n-1}}$. 
\end{proof}

Let $G$ be a daisy cube. An induced subgraph $H$ of the graph $G$ is called a $\leq$-subgraph if $V(H) = \{u \in V(G) \ |\ u\leq v \text{ for some } v\in V(H)\}$.

The next proposition follows immediately from the definition of a $\leq$-subgraph.

\begin{proposition}\label{leqSubgraphIsDC}
Let $G$ be a daisy cube. If $H$ is a $\leq$-subgraph of $G$, then $H$ is isomorphic to a daisy cube.
\end{proposition}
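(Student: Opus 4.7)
The plan is to unpack the definition of a $\leq$-subgraph and verify directly that such an $H$ satisfies the daisy-cube definition with respect to the ambient hypercube $Q_n$.

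First, I would write $G = Q_n(X)$ for some $X \subseteq B^n$ and take an arbitrary $\leq$-subgraph $H$. Set $Y = V(H)$. The definition guarantees that every $u \in V(G)$ with $u \leq v$ for some $v \in V(H)$ already lies in $V(H)$. The next step, which is the small but essential observation, is to promote this downward closure from $V(G)$ to all of $B^n$: if $u \in B^n$ satisfies $u \leq v$ for some $v \in V(H)$, then because $v \in V(G)$ there exists $x \in X$ with $v \leq x$, hence $u \leq x$ and so $u \in V(G)$ by definition of the daisy cube $Q_n(X)$. Combined with the hypothesis on $H$, this yields
\[
V(H) = \{u \in B^n \mid u \leq v \text{ for some } v \in V(H)\}.
\]

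Next, I would take $X' = V(H)$ (or, if one prefers a minimal generating set, the antichain of $\leq$-maximal elements of $V(H)$). The displayed equality above is precisely the statement that $V(H)$ is the downward closure of $X'$ in $B^n$, so it coincides with the vertex set of $Q_n(X')$.

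It remains to match edges. Since $G$ is induced in $Q_n$ and $H$ is induced in $G$, $H$ is induced in $Q_n$; thus $H = \langle V(H)\rangle_{Q_n} = Q_n(X')$, and in particular $H$ is isomorphic to a daisy cube. There is no real obstacle here — the only subtlety worth flagging is the upgrade of the downward-closure property from $V(G)$ to $B^n$, which is what makes the definition of $\leq$-subgraph sufficient (rather than needing to be stated relative to $B^n$ from the outset).
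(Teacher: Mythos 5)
Your proof is correct and is essentially the argument the paper has in mind: the paper offers no written proof at all, stating only that the proposition ``follows immediately from the definition of a $\leq$-subgraph,'' and your write-up supplies exactly the details being elided --- in particular the key observation that the downward closure of $V(H)$ taken in $B^n$ already lies in $V(G)$ because $G$ itself is downward closed, so that $H = Q_n(V(H))$ as an induced subgraph of $Q_n$. Nothing further is needed.
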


The converse of Proposition \ref{leqSubgraphIsDC} may not be necessarily true as the example in Figure \ref{figExampleLeq} shows. In Figure \ref{figExampleLeqA} we see the daisy cube $Q_4(X)$, where $X=\{0011, 0110, 1100, 1001\}$. The subgraph $H_1$ induced on the vertex set $\{0010, 0000, 1000\}$ is a $\leq$-subgraph, and is isomorphic to $Q_2(\{01, 10\})$. On the other hand, the subgraph $H_2$ is also isomorphic to $Q_2(\{01, 10\})$, however it is not a $\leq$-subgraph of $Q_4(X)$. Figure \ref{figExampleLeqb} shows the daisy cube $Q_4(Y)$, where $Y=\{0011, 1000, 0100\}$. The subgraph $H$ of $Q_4(Y)$ is isomorphic to the daisy cube $Q_2(\{01, 10\})$ and it is not a $\leq$-subgraph of $Q_4(Y)$.

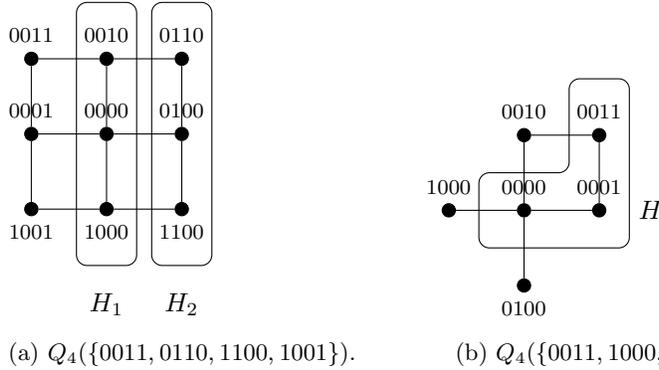
\begin{figure}[!ht]
	\centering
	\begin{subfigure}[t]{0.4\linewidth}
		\begin{tikzpicture}
\tikzstyle{rn}=[circle,fill=black,draw, inner sep=0pt, minimum size=5pt]
\tikzstyle{every node}=[font=\footnotesize]

	  \node [below,font=\normalsize] (g0) at (0, -2) {$H_1$};
	   \node [below,font=\normalsize] (g1) at (1, -2) {$H_2$};
	  
		\node [style=rn] (0) at (0, 0) [label=above:$0000$] {};
		\node [style=rn] (1) at (-1, 0) [label=above:$0001$] {};
		\node [style=rn] (2) at (-1, 1) [label=above:$0011$] {};
		\node [style=rn] (3) at (0, 1) [label=above:$0010$] {};
		\node [style=rn] (4) at (1, 1) [label=above:$0110$] {};
		\node [style=rn] (5) at (1, 0) [label=above:$0100$] {};
		\node [style=rn] (6) at (1, -1) [label=below:$1100$] {};
		\node [style=rn] (7) at (0, -1) [label=below:$1000$] {};
		\node [style=rn] (8) at (-1, -1) [label=below:$1001$] {};
		
		 \draw(1)--(0)--(5);
		 \draw(2)--(3)--(4);
		 \draw(6)--(7)--(8);
		 \draw(6)--(5)--(4);
		 \draw(7)--(0)--(3);
		 \draw(8)--(1)--(2);

		 \draw[rounded corners] (-0.4, 1.75) rectangle (0.4, -1.75) {};
		 \draw[rounded corners] (0.6, 1.75) rectangle (1.4, -1.75) {};
		 
\end{tikzpicture}
\caption{$Q_4(\{0011, 0110, 1100, 1001\})$.}
		\label{figExampleLeqA}
\end{subfigure}\qquad
\begin{subfigure}[t]{0.4\linewidth}
\begin{tikzpicture}
\tikzstyle{rn}=[circle,fill=black,draw, inner sep=0pt, minimum size=5pt]
\tikzstyle{every node}=[font=\footnotesize]  
		\node [style=rn] (0) at (0, 0) [label=above:$0000$] {};
		\node [style=rn] (1) at (-1, 0) [label=above:$1000$] {};

		\node [style=rn] (3) at (0, 1) [label=above:$0010$] {};
		\node [style=rn] (4) at (1, 1) [label=above:$0011$] {};
		\node [style=rn] (5) at (1, 0) [label=above:$0001$] {};
		\node [style=rn] (7) at (0, -1) [label=below:$0100$] {};
		
		 \draw(1)--(0)--(5);
		 \draw(7)--(0)--(3);		 
		 \draw(3)--(4);
		 \draw(5)--(4);	
			 
		 \draw[rounded corners] (0.6, 1.75) -- 
								(1.4, 1.75) -- 
		 						(1.4, -0.5)--
		 						(-0.6, -0.5) --
		 						(-0.6, 0.5) --
		 						(0.6, 0.5)--
		 						cycle {};
		 \node [right,font=\normalsize] (g1) at (1.4, 0) {$H$};
\end{tikzpicture}
		\caption{$Q_4(\{0011, 1000, 0100\})$.}
		\label{figExampleLeqb}
		\end{subfigure}
		\caption{Daisy cubes and their subgraphs.}
		\label{figExampleLeq}
\end{figure}

\begin{corollary}\label{UabIsLeq}
Let $G$ be a daisy cube and $ab\in E(G)$ arbitrary. If the notation is chosen such that $a=a_1 \ldots a_{i-1} 0 a_{i+1} \ldots a_n$ and $b=a_1 \ldots a_{i-1} 1 a_{i+1} \ldots a_n$, then $\langle U_{ab} \rangle$ is a $\leq$-subgraph of $G$.
\end{corollary}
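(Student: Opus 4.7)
The plan is to use Proposition \ref{thetaIsPeripheral} (together with the observation, already exploited in its proof, that the $\Theta$-class of an edge in a daisy cube consists of exactly the edges flipping one fixed coordinate) to give a completely concrete description of $U_{ab}$, after which the $\leq$-subgraph condition will fall out of the downward-closedness of $V(G)$.

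First I would unpack the sets. Since $G = Q_n(X)$ is a daisy cube, the $\Theta$-class $E$ of $ab$ consists of exactly those edges of $G$ that flip the $i$-th bit; hence $F_{ab} = E$, the set $W_{ab}$ is precisely $\{w \in V(G) \mid w_i = 0\}$, and $W_{ba} = \{w \in V(G) \mid w_i = 1\}$. Writing $w^{(i)}$ for the string obtained from $w$ by flipping the $i$-th bit, it follows that
\[
U_{ab} \;=\; \{\,w \in V(G) \mid w_i = 0 \text{ and } w^{(i)} \in V(G)\,\}.
\]

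To verify that $\langle U_{ab}\rangle$ is a $\leq$-subgraph, the inclusion $U_{ab} \subseteq \{u \in V(G) \mid u \leq v \text{ for some } v \in U_{ab}\}$ is trivial. For the reverse inclusion, suppose $u \in V(G)$ with $u \leq w$ for some $w \in U_{ab}$. Since $w_i = 0$, the relation $u \leq w$ forces $u_i = 0$, so $u \in W_{ab}$. I would then check coordinatewise that $u^{(i)} \leq w^{(i)}$; since $w^{(i)} \in V(G)$ and $V(G)$ is downward closed under $\leq$ (this is the defining property of a daisy cube), we get $u^{(i)} \in V(G)$. Thus $u u^{(i)} \in F_{ab}$ and $u \in U_{ab}$, as required.

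The argument is essentially routine once Proposition \ref{thetaIsPeripheral} has identified $F_{ab}$ with the coordinate flips at position $i$; the only place one has to be slightly careful is to distinguish $\leq$-closure \emph{inside} $G$ from $\leq$-closure inside the ambient $Q_n$, but these agree because the vertex set of a daisy cube is itself downward closed. I do not anticipate any substantive obstacle.
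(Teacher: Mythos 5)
Your proposal is correct and follows essentially the same route as the paper: both arguments reduce to the coordinate description of $W_{ab}$, $W_{ba}$ and $F_{ab}$ as the bit-$i$ flips, and both obtain membership of $u$ in $U_{ab}$ by observing that $u^{(i)}\leq w^{(i)}$ together with downward closure of $V(G)$ puts the flipped partner $u^{(i)}$ in $V(G)$. The only difference is presentational — you argue directly by verifying both inclusions of the $\leq$-subgraph definition, whereas the paper runs the identical computation as a proof by contradiction.
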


\begin{proof}
Towards contradiction suppose $\langle U_{ab} \rangle$ is not a $\leq$-subgraph. Proposition \ref{thetaIsPeripheral} says that $W_{ba}=U_{ba}$. Then for some vertex of $x=x_1\ldots x_n \in U_{ab}$ there is a vertex $y=y_1 \ldots y_n \in \Bn$ such that $y\leq x$ and $y \not \in U_{ab}$. Note that $x_i=0$ and since $y\leq x$, also $y_i=0$. Since all vertices in relation $\leq$ with $x$ are in $G$ (by definition of a daisy cube), then $y\in W_{ab} \setminus U_{ab}$. Let $x' = x_1 \ldots x_{i-1} 1 x_{i+1} \ldots x_n$ and $y' = y_1 \ldots y_{i-1} 1 y_{i+1} \ldots y_n$. Since $x\in U_{ab}$, and the set $U_{ba}$ contains only vertices with 1 at position $i$, and $x$ and $x'$ differ in exactly one position, it follows that $x' \in U_{ba}$. The fact $y \leq x$ implies that $y'\leq x'$, moreover $y'\in U_{ba}$, since $y_i=1$. The vertices $y$ and $y'$ differ in exactly one position, which means that $yy'\in E(G)$, further implying $y\in U_{ab}$, a contradiction.
\end{proof}

Let $H$ be a $\leq$-subgraph of a daisy cube $G$. Then the peripheral expansion $\pe(G;V(H))$ is called the $\leq$-expansion of $G$ with respect to $H$.

\begin{proposition}\label{leqExIsDaisy}
Let $G=Q_h(X)$ be a daisy cube and $H$ a $\leq$-subgraph of $G$. The $\leq$-expansion of $G$ with respect to $H$ is a daisy cube.
\end{proposition}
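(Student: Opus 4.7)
The plan is to exhibit an explicit labelling of the vertices of $G' := \pe(G; V(H))$ by binary strings of length $h+1$ and to show that, under this labelling, $G'$ coincides with the daisy cube generated by a natural set $X' \subseteq B^{h+1}$. Before anything else, I would justify that the peripheral expansion is well-defined by checking that $H$ is an isometric subgraph of $G$: given $u,v \in V(H)$, the path obtained by first flipping the bits where $u$ has a $1$ and $v$ has a $0$ (reaching $u \wedge v$) and then flipping the bits where $v$ has a $1$ and $u$ has a $0$ (reaching $v$) has length $d_{Q_h}(u,v)$, and every vertex on it is coordinate-wise $\leq u$ or $\leq v$, hence lies in $V(H)$ by the $\leq$-closure of $V(H)$.

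By Proposition~\ref{leqSubgraphIsDC}, $H$ is itself a daisy cube, so $V(H) = \{u \in B^h : u \leq y \text{ for some } y \in Y\}$ for a suitable $Y$. I would then define
$$X' = \{x0 \colon x \in X\} \cup \{y1 \colon y \in Y\} \subseteq B^{h+1},$$
and label the vertices of $G'$ as follows: each $u \in V(G) \setminus V(H)$ is labelled by $u0$, and the two copies $v_1, v_2$ of a vertex $v \in V(H)$ arising from the expansion are labelled by $v0$ and $v1$, respectively.

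The main step is to verify that $G'$ equals $Q_{h+1}(X')$ under this labelling, and the crux (which I expect to be the main obstacle) is to identify the vertex set of $Q_{h+1}(X')$ correctly. The down-closure of $\{x0 : x \in X\}$ in $B^{h+1}$ is exactly $\{u0 : u \in V(G)\}$. The down-closure of $\{y1 : y \in Y\}$ is $\{u0, u1 : u \in B^h, u \leq y \text{ for some } y \in Y\} = \{u0 : u \in V(H)\} \cup \{u1 : u \in V(H)\}$, where the equality $\{u : u \leq y \text{ for some } y \in Y\} = V(H)$ uses precisely the $\leq$-closure of $V(H)$. Taking the union gives $\{u0 : u \in V(G)\} \cup \{u1 : u \in V(H)\}$, matching the labels assigned to $V(G')$.

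Finally, I would check edge agreement by cases on the last coordinate. Two distinct labels in $B^{h+1}$ are adjacent in $Q_{h+1}$ iff they differ in exactly one position. Labels $u0, v0$ are adjacent iff $uv \in E(G)$, which accounts for the edges of $G'$ preserved from $G \setminus V(H)$, the edges between the $v_1$-copies of $V(H)$, and the edges from $v_1$'s to neighbours in $V(G) \setminus V(H)$. Labels $u1, v1$ with $u,v \in V(H)$ are adjacent iff $uv \in E(G)$, matching the edges added between $v_2$-copies. Finally, $u0$ and $v1$ are adjacent in $Q_{h+1}$ iff $u = v \in V(H)$, exactly matching the new edges $v_1 v_2$ introduced by the expansion. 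Since both the vertex sets and edge sets coincide, $G' \cong Q_{h+1}(X')$, so $G'$ is a daisy cube.
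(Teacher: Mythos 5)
Your proposal is correct and follows essentially the same route as the paper: both label the expanded graph by adding one new coordinate (0 on the copy of $G$, 1 on the copy of $H$), then verify that the resulting vertex set is the down-closure of a generating set and that adjacency corresponds exactly to differing in one bit. The only differences are cosmetic or small refinements---you append rather than prepend the new bit, you explicitly exhibit the generating set $X'$ where the paper only verifies down-closedness, and you add the (welcome but not strictly necessary) check that $H$ is isometric so the peripheral expansion is well-defined.
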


\begin{proof}
Let $G'=\pe(G;V(H))$. Then $G'$ consists of a disjoint union of a copy of $G$ and a copy $H$ with an edge between each vertex of $H$ and the corresponding vertex in the copy of $H$. Let the labels of the vertices of $G'$ be defined as follows. Prepend a 0 to the label of each vertex in $G'$ corresponding to the copy of $G$ and a 1 to each vertex of $G'$ corresponding to the copy of $H$. So the labels of the vertices of $G'$ are binary strings of length $h+1$.  We prove the assertion in two steps: showing first, that the vertex set of $G'$ is the vertex set of a daisy cube, and concluding the proof by showing that two vertices are adjacent if and only if they differ in exactly one bit.

First, we will show that for every $v \in V(G')$ all the vertices in relation $\leq$ with $v$ are also in $V(G')$. Let $v\in V(G')$ be arbitrary. If $v=0 v_1 \ldots v_h$ ($v$ is in the copy of $G$ in $G'$) then any binary string $u$ of length $h+1$,  such that $u\leq v$, is also of the form $0 u_1 \ldots u_h$. Since all binary strings $w_1\ldots w_h$ of length $h$ such that $w_1\ldots w_h \leq v_1\ldots v_h$ are in $G$, the assertion for this case follows. Now, let $v=1 v_1 \ldots v_h$ ($v$ is in the copy of $H$ in $G'$). Since $H$ is a $\leq$-subgraph, following the same line of thought as in the previous case, we can show that all binary strings $u=1 u_1 \ldots u_h$ such that $u\leq v$, are also in $V(G')$. To see, that also all binary strings $u=0 u_1 \ldots u_h$ such that $u\leq v$ are in $V(G')$, remember that there is and edge between $v$ (which is in the copy of $H$) and a vertex $w$ in the copy of $G$. Since $v=1 v_1 \ldots v_h$ and $w=0 w_1 \ldots w_h$ and $vw \in E(G')$ it follows that for every $i\in\{1, \ldots, h\}$ the values $v_i$ and $w_i$ are equal. From the first part, all vertices in relation $\leq$ with $w$ are in $V(G')$ and these are exactly the vertices $u = 0 u_1 \ldots u_h$ such that $u\leq v$.

Second, we show that two vertices are adjacent if they differ in exactly one position. Take two arbitrary adjacent vertices of $G'$, say $v=v_0 v_1 \ldots v_h$ and $u=u_0 u_1 \ldots u_h$. If both $v_0$ and $u_0$ are 0 (or 1), then $u$ and $v$ correspond to two adjacent vertices in the copy of $G$ (or $H$), which is an induced subgraph of $Q_h$ (is a $\leq$-subgraph), and the assertion follows.  If $0 = v_0 \not = u_0 = 1$, then there is an edge between the two vertices if and only if $v_1 \ldots v_h$ is a vertex of $H$ in $G$ and $u_1 \ldots u_h$ is the corresponding vertex in the copy of $H$, meaning that for every $i\in\{1, \ldots, h\}$ it holds that $v_i = u_i$. This completes the proof.
\end{proof}

Using Proposition \ref{UabIsLeq} and Proposition \ref{leqExIsDaisy} we immediately obtain the following characterization of daisy cubes.

\begin{theorem}\label{characterizationDCexp}
A connected graph $G$ is a daisy cube if and only if it can be obtained from the one-vertex graph by a sequence of $\leq$-expansions.
\end{theorem}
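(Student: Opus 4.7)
The plan is a proof by induction in both directions, leveraging the preceding propositions as black boxes.

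For the ($\Leftarrow$) direction I would induct on the length $k$ of the expansion sequence. The one-vertex graph equals $Q_0(\{\epsilon\})$, which is trivially a daisy cube, giving the base case $k=0$. For the inductive step, assume the graph $G_{k-1}$ obtained after $k-1$ $\leq$-expansions is a daisy cube. The next step produces $G_k = \pe(G_{k-1}; V(H))$ for some $\leq$-subgraph $H$ of $G_{k-1}$. By Proposition \ref{leqExIsDaisy} this is a daisy cube, closing the induction.

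For the ($\Rightarrow$) direction I would induct on $|V(G)|$. The base case $|V(G)|=1$ is immediate. For the inductive step, let $G = Q_n(X)$ be a daisy cube with $|V(G)| \geq 2$, pick any edge $ab$ of $G$ with $a$ carrying $0$ at some position $i$ and $b$ carrying $1$ there, and let $E$ be the $\Theta$-class containing $ab$. By Proposition \ref{thetaIsPeripheral} we have $U_{ba} = W_{ba}$, so the contraction $G'$ of $E$ is, up to isomorphism, $\langle W_{ab} \rangle$, and by Proposition \ref{contractionIsDC} (or Proposition \ref{WisDC}) it is a daisy cube with strictly fewer vertices. By the induction hypothesis $G'$ arises from the one-vertex graph by a sequence of $\leq$-expansions. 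It therefore suffices to exhibit $G$ as one further $\leq$-expansion of $G'$.

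The concrete identification I would make is this: the vertices of $G'$ correspond bijectively to $W_{ab}$, and within $G'$ the set $U_{ab}$ is a $\leq$-subgraph of $G'$ — this is exactly Corollary \ref{UabIsLeq} (since, as remarked there, $U_{ab}$ contains only strings with $0$ at position $i$, the closure property inherited from $G$ localizes to $\langle W_{ab}\rangle = G'$). Now I would verify that $\pe(G'; U_{ab})$ reproduces $G$: the duplicate of each $x \in U_{ab}$ plays the role of the corresponding $x' \in U_{ba}$ across $E$; the new matching edges are precisely the edges of $F_{ab}$; edges inside the copy of $G'$ correspond to edges of $G$ with both endpoints in $W_{ab}$; and edges inserted among the duplicates correspond to edges of $G$ with both endpoints in $W_{ba}$, which by Proposition \ref{thetaIsPeripheral} are exactly the images under the $\Theta$-flip of edges inside $U_{ab}$. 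This matches the definition of peripheral expansion verbatim, so $G$ is indeed the $\leq$-expansion of $G'$ with respect to $\langle U_{ab}\rangle$, completing the induction.

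The main obstacle I anticipate is bookkeeping in the last paragraph: confirming that the edges among the duplicated vertices in $\pe(G'; U_{ab})$ are precisely the edges of $\langle W_{ba}\rangle$ of $G$, rather than some larger or smaller set. This should fall out cleanly from $U_{ba} = W_{ba}$ together with the fact that $xy \in E(\langle W_{ba}\rangle)$ iff the corresponding pair in $U_{ab}$ is an edge, which is a direct consequence of partial cube structure (the $\Theta$-flip is a graph isomorphism between $\langle U_{ab}\rangle$ and $\langle U_{ba}\rangle$). No further machinery beyond Propositions \ref{thetaIsPeripheral}, \ref{contractionIsDC}, \ref{WisDC}, \ref{leqExIsDaisy} and Corollary \ref{UabIsLeq} should be needed.
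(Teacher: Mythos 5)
Your proposal is correct and follows essentially the same route as the paper, which derives the theorem directly from Corollary \ref{UabIsLeq} and Propositions \ref{thetaIsPeripheral}, \ref{WisDC} and \ref{leqExIsDaisy}; the paper simply states the conclusion as immediate, while you spell out the induction and the identification $G \cong \pe(G';U_{ab})$. The details you fill in (localizing the $\leq$-subgraph property of $U_{ab}$ to $\langle W_{ab}\rangle$, and matching the duplicated edges with $\langle W_{ba}\rangle$ via the $\Theta$-class isomorphism) are exactly the right ones and are sound.
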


From this characterization and Theorem \ref{muldersExpThm} we can also characterize all median daisy cubes as follows.

\begin{corollary}
A daisy cube is a median graph if and only if it can be obtained from the one-vertex graph by a sequence of convex $\leq$-expansions.
\end{corollary}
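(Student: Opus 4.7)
The plan is to combine Theorem \ref{characterizationDCexp} with Mulder's convex expansion theorem (Theorem \ref{muldersExpThm}), tracking the ``$\leq$'' aspect and the convexity aspect simultaneously at every expansion step.

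The backward implication is immediate: a convex $\leq$-expansion is by definition a $\leq$-expansion, so Theorem \ref{characterizationDCexp} certifies each intermediate graph as a daisy cube; and a peripheral expansion is the special case $V_1=V(G)$ of a general expansion, so a convex $\leq$-expansion is a convex expansion in the sense of Theorem \ref{muldersExpThm}, keeping each intermediate graph median. A sequence of convex $\leq$-expansions starting from $K_1$ thus produces a graph that is simultaneously a daisy cube and a median graph.

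For the forward implication I would argue by induction on $|V(G)|$. Assume $G$ is a daisy cube and a median graph with $|V(G)|>1$, pick any edge $ab\in E(G)$, and let $E$ be its $\Theta$-class. Contract $E$ to obtain $G'$. Proposition \ref{contractionIsDC} gives that $G'$ is a daisy cube; and since the contraction of $E$ can be identified graph-theoretically with the convex subgraph $\langle W_{ab}\rangle$ of $G$ (the $E$-partner map is an isomorphism $\langle W_{ba}\rangle \to \langle W_{ab}\rangle$ in any partial cube), $G'$ inherits the median property from $G$. By Proposition \ref{thetaIsPeripheral} the class $E$ is peripheral, so exactly as in the proof of Theorem \ref{characterizationDCexp} we may identify $G$ with $\pe(G';U_{ab})$, and Proposition \ref{UabIsLeq} says this peripheral expansion is a $\leq$-expansion.

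It remains to verify that this $\leq$-expansion is also \emph{convex}, i.e.\ that $\langle U_{ab}\rangle$ is a convex subgraph of $G'$. The key fact I would invoke is the classical observation that in any median graph, the ``inner boundary'' $\langle U_{ab}\rangle$ of a $\Theta$-class is convex; this follows from closure of $U_{ab}$ under the median operation, since the $E$-partner map forces the unique median of three vertices of $U_{ab}$ to be adjacent to the unique median of their partners in $U_{ba}$, hence to lie in $U_{ab}$. Since no shortest path between vertices of $W_{ab}$ uses an edge of $E$, distances inside $W_{ab}$ are preserved under the contraction, so convexity of $\langle U_{ab}\rangle$ in $G$ descends to convexity in $G'$. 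Applying the inductive hypothesis to $G'$ and prepending this final convex $\leq$-expansion yields the required sequence. The only mildly subtle point in the argument is this convexity-transfer step from $G$ to $G'$; everything else is bookkeeping using the propositions already established.
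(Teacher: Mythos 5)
Your proposal is correct and follows the same basic route the paper intends, namely combining Theorem \ref{characterizationDCexp} with Mulder's Theorem \ref{muldersExpThm}; but the paper offers no written proof at all, whereas you supply the one step that is genuinely not automatic: in the forward direction one must exhibit a \emph{single} expansion sequence that is simultaneously a $\leq$-expansion and a convex expansion at every stage, and you do this by showing that $\langle U_{ab}\rangle$ is convex in a median graph (median-closedness plus connectedness) and that this convexity survives the contraction because $W_{ab}$ is convex, so distances in $G'\cong\langle W_{ab}\rangle$ agree with those in $G$. That is exactly the missing glue, and your induction is sound. One small inaccuracy worth fixing: the $F_{ab}$-matching gives an isomorphism of $\langle U_{ba}\rangle$ onto $\langle U_{ab}\rangle$, not of $\langle W_{ba}\rangle$ onto $\langle W_{ab}\rangle$; in the daisy-cube setting Proposition \ref{thetaIsPeripheral} gives $W_{ba}=U_{ba}$, so the contraction identifies $W_{ba}$ with the (possibly proper) subset $U_{ab}$ of $W_{ab}$, and the contracted graph is $\langle W_{ab}\rangle$ for that reason --- your conclusion stands, but the parenthetical justification as stated is false for general partial cubes. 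You might also note explicitly that $U_{ab}$, being downward closed in $G$, remains downward closed inside $W_{ab}$ (after deleting the constant $i$th coordinate), so Proposition \ref{UabIsLeq} really does certify a $\leq$-subgraph of $G'$ and not merely of $G$; this is glossed over at the same level of informality as in the paper's proof of Theorem \ref{characterizationDCexp}.
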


\section{Finding proper labellings}\label{findingLabellings}
In this section we present an algorithm which for a given unlabelled graph $G$ isomorphic to a daisy cube (the embedding into the corresponding hypercube is not given) assigns a proper labelling to vertices of $G$.

\begin{proposition}\label{0isLeft}
Let $G=Q_n(X)$ be a daisy cube and $e=ab \in E(G)$. If the notation of the endpoints of $e$ can be chosen such that $|W_{ab}| > |W_{ba}|$, then $0^n \in W_{ab}$.
\end{proposition}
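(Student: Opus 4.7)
The plan is to exploit the fact that flipping a $1$ to a $0$ at any coordinate never takes you out of a daisy cube (by the defining down-closure under $\leq$). This should force the "$0$-side" of any $\Theta$-class to be at least as large as the "$1$-side", after which the conclusion about $0^n$ is immediate.

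Concretely, I would begin by naming the coordinate: say $a$ and $b$ differ at position $i$, with $a_i = 0$ and $b_i = 1$, and put
\[
A_0 = \{v \in V(G) : v_i = 0\}, \qquad A_1 = \{v \in V(G) : v_i = 1\}.
\]
Since $G$ is a partial cube and $ab$ is an edge, $\{W_{ab}, W_{ba}\} = \{A_0, A_1\}$; in fact $W_{ab} = A_0$ and $W_{ba} = A_1$ under this choice of labelling. So the claim reduces to showing $|A_0| \geq |A_1|$: for then any relabelling making $|W_{ab}| > |W_{ba}|$ must put $W_{ab} = A_0$, which contains $0^n$ (noting $0^n \in V(G)$ because $G$ is nonempty and $0^n \leq x$ for any $x \in V(G)$).

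The key step is constructing an injection $\varphi : A_1 \hookrightarrow A_0$. Define $\varphi(v) = v_1 \ldots v_{i-1} 0 v_{i+1} \ldots v_n$, i.e., replace the $1$ at position $i$ with $0$. Then $\varphi(v) \leq v$, and since $G = Q_n(X)$ is a daisy cube, $v \in V(G)$ means $v \leq x$ for some $x \in X$, hence $\varphi(v) \leq x$ as well and $\varphi(v) \in V(G)$. Clearly $\varphi(v) \in A_0$ and $\varphi$ is injective (it changes only position $i$ and is the identity on the other coordinates). Hence $|A_0| \geq |A_1|$.

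Combining the two observations: if the notation has been chosen so that $|W_{ab}| > |W_{ba}|$, then the larger side cannot be $A_1$, so $W_{ab} = A_0 \ni 0^n$, as required. There is no real obstacle here — the only mild subtlety is making sure $0^n$ is actually a vertex of $G$, which is automatic from the definition of a daisy cube once $V(G) \neq \emptyset$.
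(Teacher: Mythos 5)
Your proof is correct and follows essentially the same route as the paper: both arguments rest on the down-closure of $V(G)$ under $\leq$, which forces the $0$-side of the $\Theta$-class to contain $0^n$ and to be at least as large as the $1$-side. The paper obtains this implicitly via Corollary~\ref{UabIsLeq} (that $\langle U_{ab}\rangle$ is a $\leq$-subgraph, hence contains $0^n$) together with the peripheral matching, whereas you make the cardinality comparison $|A_0|\geq|A_1|$ explicit with the injection $\varphi$ — a welcome clarification of a step the paper leaves terse, but not a genuinely different method.
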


\begin{proof}
By Corollary \ref{UabIsLeq}, the notation can be chosen such that $a$ has a 0 at position $i$ and $b$ has a 1 at position $i$ and $U_{ab}$ induces a $\leq$-subgraph. For any vertex $u\in V(G)$ it holds that $0^n \leq u$ and therefore $0^n \in U_{ab} \subseteq W_{ab}$.
\end{proof}

\begin{proposition}\label{0canBeBoth}
Let $G=Q_n(X)$ be a daisy cube and $e=ab \in E(G)$. If $|W_{ab}| = |W_{ba}|$, then there exists a proper labelling of $G$ such that $0^n \in W_{ab}$ and a proper labelling of $G$ such that $0^n \in W_{ba}$.
\end{proposition}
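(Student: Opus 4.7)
The plan is to start from the given proper labelling of $G=Q_n(X)$ and manufacture a second proper labelling by composing with the bit-flip automorphism $\phi$ of $Q_n$ at position $i$, where $i$ is the coordinate in which $a$ and $b$ differ. After choosing notation so that $a$ has a $0$ and $b$ has a $1$ at position $i$, the vertex $0^n$ already lies in exactly one of $W_{ab}$ and $W_{ba}$ in the current labelling; the goal is to show that this single bit flip always produces a proper labelling in which the vertex labelled $0^n$ ends up on the opposite side of the edge $ab$. Combining the original labelling with the new one will then yield both required proper labellings simultaneously.

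The first key step is the reduction $U_{ab}=W_{ab}$. Proposition \ref{thetaIsPeripheral} already gives $U_{ba}=W_{ba}$; since $G$ is a partial cube, the $\Theta$-class of $ab$ is a matching, so $|U_{ab}|=|U_{ba}|=|W_{ba}|=|W_{ab}|$, and combined with the trivial inclusion $U_{ab}\subseteq W_{ab}$ this forces equality. Consequently the $\Theta$-class of $ab$ is a perfect matching between $W_{ab}$ and $W_{ba}$ whose edges are precisely the pairs of vertices of $G$ differing only in bit $i$.

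Next I would verify that $\phi(V(G))=V(G)$. For $u\in V(G)$ with $u_i=1$ this is immediate since $\phi(u)\le u$ and $V(G)=V(Q_n(X))$ is downward-closed with respect to $\le$. For $u\in V(G)$ with $u_i=0$, i.e.\ $u\in W_{ab}=U_{ab}$, the perfect matching above furnishes a $\Theta$-partner of $u$ in $W_{ba}$, and this partner must equal $\phi(u)$; hence $\phi(u)\in V(G)$. Because $\phi$ is an automorphism of $Q_n$, the composition $\phi\circ\ell$ (where $\ell$ denotes the original labelling) is again an isometric embedding of $G$ into $Q_n$ with the same downward-closed label set $V(Q_n(X))$, and is therefore a proper labelling. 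Under it the vertex labelled $0^n$ is the one originally labelled $0^{i-1}10^{n-i}$, which intrinsically belongs to $W_{ba}$.

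I expect the main obstacle to be precisely the reduction $U_{ab}=W_{ab}$. Without it, flipping bit $i$ could send some $u\in W_{ab}\setminus U_{ab}$ outside $V(G)$ (its $\Theta$-partner in $W_{ba}$ is missing), so the relabelled vertex set would no longer be downward-closed under $\le$ and the resulting labelling would fail to be proper. This is exactly where the equal-size hypothesis $|W_{ab}|=|W_{ba}|$ is indispensable.
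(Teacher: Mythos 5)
Your proof is correct and follows essentially the same route as the paper: both use $|W_{ab}|=|W_{ba}|$ together with Proposition \ref{thetaIsPeripheral} to conclude that the $\Theta$-class of $ab$ is a perfect matching between the two sides, and then obtain the second proper labelling by flipping the $i$th bit of every label. The only difference is one of presentation: you phrase the flip as composing with the coordinate automorphism $\phi$ of $Q_n$ and explicitly check $\phi(V(G))=V(G)$ (the downward-closure case for $u_i=1$ and the matching case for $u_i=0$), a verification the paper leaves as ``easy to verify.''
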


\begin{proof}
The fact that $|W_{ab}| = |W_{ba}|$ and that every $\Theta$-class in $G$ is peripheral (Proposition \ref{thetaIsPeripheral}) implies that $W_{ab} = U_{ab}$ and $W_{ba} = U_{ba}$, moreover the $\Theta$-equivalence class of $ab$ induces a matching (and also an isomorphism of corresponding induced subgraphs) between $U_{ab}$ and $U_{ba}$. Therefore $G$ is isomorphic to $\langle U_{ab} \rangle_G \Box K_2$. By Corollary \ref{UabIsLeq} at least one of $U_{ab}$ and $U_{ba}$ induces a $\leq$-subgraph. W.l.o.g., assume it is $U_{ab}$, therefore $0^n \in U_{ab}$. Let $u \in U_{ba}$ be the vertex adjacent to $0^n$. It follows that $u$ has exactly one 1, say at position $i$. Moreover, all vertices in $U_{ab}$ have 0 at position $i$ and all vertices of $U_{ba}$ have 1 at position $i$. Changing values at position $i$ to 1 for all vertices of $U_{ab}$ and to 0 for all vertices of $U_{ba}$ we obtain another labelling of the vertices of $G$. It is easy to verify that such labelling is a proper labelling and that in this case $0^n$ belongs to $U_{ba}$ and therefore to $W_{ba}$.
\end{proof}

From Proposition \ref{0isLeft} and Proposition \ref{0canBeBoth} we obtain the following corollary.

\begin{corollary}\label{0jeVVecjem}
Let $G=Q_n(X)$ be a daisy cube and $e=ab \in E(G)$. If the notation can be chosen such that $|W_{ab}| \geq |W_{ba}|$, then there exists a proper labelling of $G$ such that $0^n \in W_{ab}$.
\end{corollary}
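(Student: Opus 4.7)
The plan is to observe that the hypothesis $|W_{ab}| \geq |W_{ba}|$ splits into two cases according to whether the inequality is strict or an equality, and each case is handled by one of the two preceding propositions.

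First I would treat the case $|W_{ab}| > |W_{ba}|$. Here the conclusion is immediate from Proposition \ref{0isLeft}: the proper labelling of $G$ inherited from $G=Q_n(X)$ already satisfies $0^n \in W_{ab}$, since that proposition says $0^n$ lies on the larger side of any edge. So no relabelling is needed in this case.

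Next I would treat the case $|W_{ab}| = |W_{ba}|$. By Proposition \ref{0canBeBoth}, in this situation there exist proper labellings of $G$ placing $0^n$ on either side of $e$; in particular there is a proper labelling with $0^n \in W_{ab}$. Combining the two cases yields the claim.

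The main obstacle is essentially nil here: the two propositions have been arranged precisely to cover the strict and non-strict cases, so the corollary is little more than a case split. The only thing to be careful about is to note that in the equality case one may have to pass to a different proper labelling (the original embedding might place $0^n$ on the wrong side), whereas in the strict case the original labelling already works. Thus in both cases the desired proper labelling exists, which completes the proof.
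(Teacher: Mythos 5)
Your proof is correct and matches the paper exactly: the paper derives this corollary directly from Proposition \ref{0isLeft} (strict inequality case) and Proposition \ref{0canBeBoth} (equality case), which is precisely your case split. Your remark that only the equality case may require passing to a different proper labelling is a fair and accurate reading of the two propositions.
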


These results enable us to find a proper labelling for any graph isomorphic to a daisy cube as presented in Algorithm \ref{algOznake}.

\begin{algorithm}[!ht]
\KwIn{an unlabelled graph $G$ isomorphic to a daisy cube}
\KwOut{a proper labelling of $G$}
\DontPrintSemicolon
\BlankLine
Compute $\Theta$ and denote $\Theta$-classes by $\Theta_1, \ldots, \Theta_k$.\;
\For{$i=1$ \KwTo $k$} {
	Choose $ab \in \Theta_i$ arbitrarily. \;
	Determine $W_{ab}$ and $W_{ba}$. \;
	\eIf{$|W_{ab}| \geq |W_{ba}|$}{
		$W' := W_{ab}$ \;
		$W'' := W_{ba}$ \;
	}{
		$W' := W_{ba}$ \;
		$W'' := W_{ab}$ \;
	}
	\ForAll{$v \in W'$}{
		Set the $i$th coordinate of the label of $v$ to 0. \;
	}
	\ForAll{$v \in W''$}{
		Set the $i$th coordinate of the label of $v$ to 1. \;
	}
}
\caption{Proper labelling of a daisy cube}\label{algOznake}
\end{algorithm}

\begin{theorem}
Algorithm \ref{algOznake} assigns a proper labelling to vertices of an unlabelled graph isomorphic to a daisy cube in $O(mn)$ time, where $n$ is the number of vertices and $m$ the number of edges of $G$.
\end{theorem}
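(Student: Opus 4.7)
The plan is to prove two things: that Algorithm \ref{algOznake} outputs a proper labelling (correctness), and that it runs in $O(mn)$ time. Throughout, write $k$ for the number of $\Theta$-classes of $G$; this equals the dimension of the hypercube into which $G$ embeds isometrically, and one has $k \leq n-1$ since distinct $\Theta$-classes have disjoint edge sets and $G$ has $n-1$ spanning-tree edges.

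For correctness, I would fix an arbitrary proper labelling $\lambda_0$ of $G$, which exists because $G$ is isomorphic to a daisy cube, and show that the labelling $\lambda^*$ produced by the algorithm is also proper by transforming $\lambda_0$ into $\lambda^*$ one coordinate at a time. Consider each $\Theta$-class $\Theta_i$ separately. If $|W_{ab}| > |W_{ba}|$, then Proposition \ref{0isLeft} forces $\lambda_0^{-1}(0^k) \in W_{ab}$ in any proper labelling, so the algorithm's rule of placing $0$ on the larger side already agrees with $\lambda_0$ on coordinate $i$. If $|W_{ab}| = |W_{ba}|$ and the algorithm's choice disagrees with the running labelling on coordinate $i$, then Proposition \ref{0canBeBoth} supplies a new proper labelling that differs only by flipping coordinate $i$. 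Iterating one coordinate at a time converts $\lambda_0$ into $\lambda^*$, certifying that $\lambda^*$ is proper.

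For the running time, computing the $\Theta$-relation on a partial cube can be done in $O(mn)$ time by standard algorithms (for instance, a BFS from every vertex followed by distance-difference tests). The main for-loop iterates $k \leq n$ times; in each iteration $W_{ab}$ and $W_{ba}$ are recovered in $O(n+m)$ time by a single BFS after removing the edges of $\Theta_i$ (which form a cut in any partial cube), and writing one coordinate of each label costs $O(n)$. Hence the loop contributes $O(k(n+m)) = O(mn)$ and the total is $O(mn)$.

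The main obstacle is the iterated application of Proposition \ref{0canBeBoth} in the correctness argument. The proposition is stated for a single $\Theta$-class of a given daisy cube, so one must argue that after flipping coordinate $i$ the intermediate labelling is still proper and that the equalities $|W_{a'b'}| = |W_{b'a'}|$ of other classes are unchanged. Both points follow from the observation that the partition $\{W_{ab}, W_{ba}\}$ of $V(G)$ by any $\Theta$-class depends only on the underlying graph, not on any labelling; a straightforward induction on the number of coordinates where $\lambda^*$ still disagrees with the running labelling closes the argument.
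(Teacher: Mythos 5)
Your proposal is correct and follows essentially the same route as the paper: correctness via Proposition \ref{0isLeft} and Proposition \ref{0canBeBoth} (i.e.\ Corollary \ref{0jeVVecjem}), and the $O(mn)$ bound from computing $\Theta$ in $O(mn)$ time plus $O(m)$ work per $\Theta$-class over at most $n$ classes. The only difference is that the paper compresses the correctness argument into a single citation of those two results, whereas you spell out the one-coordinate-at-a-time transformation of a proper labelling into the algorithm's output and justify iterating Proposition \ref{0canBeBoth} (noting that the partitions $\{W_{ab},W_{ba}\}$ are labelling-independent) --- a detail the paper leaves implicit.
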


\begin{proof}
The correctness of the assigned labels follows from Proposition \ref{0isLeft} and Corollary \ref{0jeVVecjem}. 

Now consider the time complexity. Step 1 can be done in $O(mn)$ time \cite[Theorem 18.6]{HIK2011knjiga}. For one iteration of the loop at step 2 the following holds. The edge $ab$ can be chosen in constant time. The sets $W_{ab}$ and $W_{ba}$ can be computed in $O(m)$ time (e.g. by using BFS from each endpoint of $ab$). Both loops (step 12 and step 15) run together in $O(n)$ time. Since the number of $\Theta$-classes (the number of times the for loop at step 2 repeats) is bounded by $n$, the assertion follows.
\end{proof}

\section{Generalization of daisy cubes}
In this section we give a generalization of the concept of daisy cubes. All graphs considered are connected.

\providecommand{\lgr}{\leq_{G,r}}
\providecommand{\lqz}{\leq_{Q_h,0^h}}
\begin{definition}\label{defLgr}
Let $G$ be a rooted graph with the root $r$. Let $u$ and $v$ be two vertices of $G$. We say that $u \lgr v$, if $u$ is on some shortest $v,r$-path.
\end{definition}

\begin{proposition}
Let $G$ be a rooted graph with the root $r$. The relation $\lgr$ is a partial order on $V(G)$.
\end{proposition}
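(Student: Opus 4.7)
The plan is to verify the three axioms of a partial order (reflexivity, antisymmetry, transitivity) directly from Definition \ref{defLgr}, translating the statement ``$u$ lies on some shortest $v,r$-path'' into the distance equation
\[
d(v,r) = d(v,u) + d(u,r),
\]
which is the most convenient form for arithmetic manipulation.

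\emph{Reflexivity} is immediate: for any $u \in V(G)$ the vertex $u$ clearly lies on every shortest $u,r$-path (as an endpoint), so $u \lgr u$. For \emph{antisymmetry}, suppose $u \lgr v$ and $v \lgr u$. Then
\[
d(v,r) = d(v,u) + d(u,r) \quad \text{and} \quad d(u,r) = d(u,v) + d(v,r).
\]
Substituting the first into the second yields $d(u,r) = d(u,v) + d(v,u) + d(u,r) = 2\,d(u,v) + d(u,r)$, forcing $d(u,v)=0$ and hence $u=v$.

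The main work is \emph{transitivity}. Suppose $u \lgr v$ and $v \lgr w$, so that $d(v,r) = d(v,u) + d(u,r)$ and $d(w,r) = d(w,v) + d(v,r)$. Combining,
\[
d(w,r) = d(w,v) + d(v,u) + d(u,r).
\]
On the other hand the triangle inequality gives $d(w,r) \le d(w,u) + d(u,r)$ and $d(w,u) \le d(w,v) + d(v,u)$, so
\[
d(w,r) \le d(w,u) + d(u,r) \le d(w,v) + d(v,u) + d(u,r) = d(w,r).
\]
Equality must therefore hold throughout, in particular $d(w,r) = d(w,u) + d(u,r)$, which means $u$ lies on some shortest $w,r$-path and hence $u \lgr w$.

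I do not anticipate any real obstacle: each axiom is a one- or two-line distance computation. The only point that could be called a ``step'' is recognising in the transitivity argument that one should add the two given equalities and then sandwich the result between two triangle-inequality estimates to force equality; everything else is routine.
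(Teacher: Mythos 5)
Your proof is correct and follows essentially the same route as the paper: reflexivity is immediate, and antisymmetry and transitivity are handled by translating the shortest-path condition into the distance equation $d(v,r)=d(v,u)+d(u,r)$ and manipulating. In fact your transitivity step is slightly more careful than the paper's, which jumps from $d(w,r)=d(w,v)+d(v,u)+d(u,r)$ directly to ``$u$ is on some shortest $w,r$-path''; your triangle-inequality sandwich supplies the justification ($d(w,r)=d(w,u)+d(u,r)$) that this jump implicitly relies on.
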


\begin{proof}
Since every vertex $v \in V(G)$ is on every shortest path from $v$ to $r$, it follows that $\lgr$ is reflexive. 

Assume that for any two vertices $u$ and $v$ from $V(G)$ it holds that $u\lgr v$ and $v \lgr u$. This means that $u$ is on some shortest $v,r$-path and that $v$ is on some shortest $u,r$-path. Therefore, $d(v,r)=d(v,u) + d(u,r)$ and $d(u,r)=d(u,v)+d(v,r)$. This implies that $d(u,v)=0$, meaning that $u=v$. Therefore $\lgr$ is antisymmetric.

Let $u, v$ and $w$ be arbitrary vertices of $G$. Also, let $u\lgr v$ and $v \lgr w$. This means $d(v,r)=d(v,u) + d(u,r)$ and that $d(w,r)=d(w,v)+d(v,r)$. This gives that $d(w,r)=d(w,v)+d(v,u)+d(u,r)$ which implies that $u$ is on some shortest $w,r$-path and therefore $u\lgr w$. This proves the transitivity property and concludes the proof.
\end{proof}

\begin{definition}\label{defDG}
Let $G$ be a rooted graph with the root $r$. For $X\subseteq V(G)$ the \emph{daisy graph $G_r(X)$ of the graph $G$ with respect to $r$ (generated by $X$)} is the subgraph of $G$ where $$G_r(X)=\left\langle \{ u \in V(G)\ |\ u\lgr v \text{ for some }v\in X\} \right\rangle.$$
\end{definition}

From Definition \ref{defDG} it immediately follows that if $u$ is a vertex of the daisy graph $G_r(X)$ of the graph $G$ with respect to $r$, then $I_G(u,r) \subseteq V(G_r(X))$. This fact also immediately gives the following result.

\begin{proposition}
Let $G$ be a rooted graph with the root vertex $r$. If $H$ is a convex subgraph of $G$, such that $r \in V(H)$, then $H$ is a daisy graph of $G$ with respect to $r$.
\end{proposition}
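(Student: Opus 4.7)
The plan is to exhibit an explicit generating set $X$ and verify the two inclusions on vertex sets; since both $H$ and $G_r(X)$ are induced subgraphs of $G$, equality of vertex sets will finish the proof.

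The natural choice is to take $X := V(H)$. I would then show that
\[
V(H) \;=\; \{u \in V(G) \ |\ u \lgr v \text{ for some } v \in V(H)\}.
\]
For the inclusion from left to right, given $v \in V(H)$, reflexivity of $\lgr$ yields $v \lgr v$, so $v$ lies in the right-hand set. For the reverse inclusion, suppose $u \in V(G)$ satisfies $u \lgr v$ for some $v \in V(H)$. By Definition \ref{defLgr}, $u$ lies on some shortest $v,r$-path $P$ in $G$. Since $H$ is convex and both endpoints $v$ and $r$ of $P$ belong to $V(H)$, the entire path $P$ is contained in $H$; in particular $u \in V(H)$.

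Once the vertex sets agree, I would note that $G_r(V(H))$ is, by Definition \ref{defDG}, the subgraph of $G$ \emph{induced} by that vertex set, and $H$, being convex, is itself an induced subgraph of $G$ on the same vertex set. Hence $H = G_r(V(H))$, which witnesses $H$ as a daisy graph of $G$ with respect to $r$.

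The argument is essentially a one-line application of convexity together with reflexivity of $\lgr$, so there is no substantive obstacle; the only subtle point is recognizing that convexity of $H$ is exactly what forces every shortest $v,r$-path (not merely some such path) to stay inside $H$, which is what is needed to handle an arbitrary $u$ with $u \lgr v$.
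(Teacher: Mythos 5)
Your proof is correct and is essentially the argument the paper has in mind (the paper states the result as an immediate consequence and gives no details): take $X=V(H)$, use reflexivity of $\lgr$ for one inclusion and convexity of $H$ together with $r\in V(H)$ for the other, then observe both graphs are induced on the same vertex set. No issues.
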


By definition of relation $\leq$ on the vertices of the hypercube $Q_h$ (binary strings of length $h$), one immediately obtains that for two such vertices, say $u$ and $v$, the following is true: $u \leq v$ if and only if $u$ is on some shortest path between $v$ and $0^h$, which is definition of $\lqz$. Therefore daisy cubes are a special case of daisy graphs, specifically as the following proposition says. 

\begin{proposition}
If $G=Q_h(X)$ is a daisy cube, then $G$ is a daisy graph of $Q_h$ with respect to $0^h$.
\end{proposition}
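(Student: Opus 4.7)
The plan is to show that the two vertex sets defining $Q_h(X)$ (as a daisy cube) and $(Q_h)_{0^h}(X)$ (as a daisy graph of $Q_h$ with root $0^h$) coincide; since both are induced subgraphs of $Q_h$ on the same vertex set, they are the same graph. The whole proposition therefore reduces to verifying the equivalence already flagged in the paragraph preceding the statement: for $u,v \in B^h$,
\[
u \leq v \iff u \lqz v.
\]
Once this is in hand, the set $\{u \in B^h : u \leq x \text{ for some } x\in X\}$ that defines $Q_h(X)$ is literally the set $\{u \in V(Q_h) : u \lqz v \text{ for some } v\in X\}$ that defines $(Q_h)_{0^h}(X)$.

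My first step is to establish the equivalence above via a Hamming-distance computation. Recall that in $Q_h$ the distance $d(x,y)$ equals the Hamming distance and in particular $d(x,0^h)$ equals the number of $1$'s in $x$. Assume $u\leq v$, so that $u$ has a $0$ in every coordinate where $v$ has a $0$. Then the positions where $u$ and $v$ differ are precisely those where $v_i=1$ and $u_i=0$, which gives $d(u,v) = d(v,0^h) - d(u,0^h)$, i.e.\ $d(v,0^h) = d(v,u)+d(u,0^h)$. Hence $u$ lies on some shortest $v,0^h$-path, so $u \lqz v$.

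The reverse direction is the little calculation that could conceivably be the main obstacle, but it is also routine. Suppose $u \lqz v$, so $d(v,0^h) = d(v,u)+d(u,0^h)$. Write $p = |\{i : u_i=1,\, v_i=0\}|$ and $q = |\{i : u_i=0,\, v_i=1\}|$. Then $d(u,v)=p+q$, while $d(v,0^h)-d(u,0^h) = q - p$. Substituting into the shortest-path equality gives $q-p = p+q$, hence $p=0$. This means there is no coordinate with $u_i=1$ and $v_i=0$, i.e.\ $u\leq v$.

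With the two partial orders agreeing on $B^h$, I conclude
\[
\{u\in B^h : u\leq x \text{ for some } x\in X\} \;=\; \{u\in V(Q_h) : u\lqz v \text{ for some } v\in X\},
\]
and since both $Q_h(X)$ and $(Q_h)_{0^h}(X)$ are the subgraph of $Q_h$ induced by this common set, the two graphs coincide. Thus $G=Q_h(X)$ is a daisy graph of $Q_h$ with respect to $0^h$, as required.
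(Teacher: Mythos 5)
Your proof is correct and follows the same route as the paper: the paper also reduces the proposition to the observation that $\leq$ and $\lqz$ coincide on $B^h$ (stated in the paragraph immediately preceding the proposition, where it is asserted without computation), after which the two defining vertex sets are literally identical. You have merely supplied the routine Hamming-distance verification that the paper leaves implicit.
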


As stated in Proposition \ref{DCisPC}, daisy cubes are isometric subgraphs of hypercubes. We now give a sufficient condition for when a daisy graph of a rooted graph $G$ with respect to the root $r$ is isometric.

\begin{proposition}\label{medImpliesIsometric}
Let $G$ be a rooted graph with the root $r$. If for any two vertices of $G$, say $u$ and $v$, it holds that there exists a median of $u$, $v$ and $r$, then every daisy graph of $G$ with respect to $r$ is isometric in $G$.
\end{proposition}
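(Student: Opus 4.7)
The plan is to leverage the hypothesized median to exhibit, for any two vertices $u,v$ of the daisy graph $G_r(X)$, an explicit shortest $u,v$-path in $G$ all of whose vertices lie in $V(G_r(X))$. Since $G_r(X)$ is an induced subgraph, producing such a path will show $d_{G_r(X)}(u,v) \leq d_G(u,v)$, and the reverse inequality is automatic.

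First I would record the key downward-closure property of $G_r(X)$: if $w \in V(G_r(X))$ and $w' \lgr w$, then $w' \in V(G_r(X))$. This is immediate from Definition \ref{defDG} together with transitivity of $\lgr$, since $w \lgr x$ for some $x\in X$ forces $w' \lgr x$. In particular, every vertex on every shortest $w,r$-path belongs to $V(G_r(X))$ whenever $w$ does.

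Next, fix $u,v \in V(G_r(X))$ and let $m$ be a median of $u,v,r$ in $G$, guaranteed to exist by hypothesis. Because $m$ lies on a shortest $u,r$-path, we have $m \lgr u$, so the downward-closure property gives $m \in V(G_r(X))$. Now choose a shortest $u,r$-path $P_u$ through $m$, and let $Q_u$ be its initial segment from $u$ to $m$; every vertex of $Q_u$ lies on the shortest $u,r$-path $P_u$, hence is $\lgr u$, hence is in $V(G_r(X))$. By exactly the same argument applied to $v$, there is a shortest $v,m$-path $Q_v$ with all vertices in $V(G_r(X))$.

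Concatenating $Q_u$ and the reverse of $Q_v$ gives a walk from $u$ to $v$ of length $d_G(u,m)+d_G(m,v)=d_G(u,v)$, where the last equality uses that $m$ lies on a shortest $u,v$-path. All vertices of this walk are in $V(G_r(X))$, and since $G_r(X)$ is induced, all its edges are present in $G_r(X)$. Therefore $d_{G_r(X)}(u,v) \leq d_G(u,v)$, which combined with the trivial opposite inequality yields $d_{G_r(X)}(u,v) = d_G(u,v)$, i.e.\ $G_r(X)$ is isometric in $G$. I do not expect a serious obstacle here: the only subtlety is choosing the shortest $u,m$- and $v,m$-paths as subpaths of shortest paths to $r$, so that downward-closure can be invoked to keep them inside $V(G_r(X))$.
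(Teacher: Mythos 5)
Your proposal is correct and follows essentially the same route as the paper: both arguments place the median $w$ (your $m$) inside the daisy graph via the downward-closure of $V(G_r(X))$ under $\lgr$ (equivalently, $I_G(u,r)\subseteq V(G_r(X))$), observe that shortest paths from $u$ and $v$ to $w$ stay inside the induced subgraph, and combine $d_G(u,v)=d_G(u,w)+d_G(w,v)$ with the triangle inequality to conclude isometry. The only cosmetic difference is that you explicitly concatenate the two subpaths, whereas the paper phrases the same step via $d_H(u,v)\leq d_H(u,w)+d_H(w,v)$.
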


\begin{proof}
Let $H$ be an arbitrary daisy graph of $G$ with respect to $r$. Also, let $u$ and $v$ be two arbitrary vertices of $H$, and let $w$ be a median of $u$, $v$ and $r$. Since $I_G(u,r) \subseteq V(H)$, $I_G(v,r) \subseteq V(H)$ and $H$ is an induced subgraph of $G$, then any shortest $u,r$-path and any shortest $v,r$-path is completely contained in $H$. Also, $w$ is on some shortest $u,r$-path, as well as on some shortest $v,r$-path. This implies that $d_G(u,w) = d_H(u,w)$ and $d_G(v,w) = d_H(v,w)$. We know that $w$ is also on some shortest $u,v$-path in $G$, implying $d_G(u,v) = d_G(u,w) + d_G(w,v)$. Also,  $d_H(u,v) \leq d_H(u,w) + d_H(w,v) = d_G(u,w) + d_G(w,v) = d_G(u,v)$. Since $H$ is a subgraph of $G$ we also know that $d_H(u,v)\geq d_G(u,v)$, thus $d_H(u,v)=d_G(u,v)$. Since this holds for two arbitrary vertices $u$ and $v$, it follows that $H$ is an isometric subgraph of $G$.
\end{proof}

The converse of Proposition \ref{medImpliesIsometric} may not necessarily be true. In Figure \ref{figConvMedImplIso} we see a rooted graph $G$ with the root $r$ and the daisy graph $H$ of the graph $G$ with respect to $r$ generated by $\{u,v\}$. The edges of $H$ are depicted with thick lines. $H$ is an isometric subgraph of $G$, however, no median exists for the vertices $u$, $v$ and $r$.

\begin{figure}[!ht]
	\centering
	\begin{tikzpicture}
\tikzstyle{rn}=[circle,fill=black,draw, inner sep=0pt, minimum size=5pt]
\tikzstyle{every node}=[font=\footnotesize]
		\node [style=rn] (0) at (-2,2) [label=above:$u$] {};  		
		\node [style=rn] (1) at (-0.6666666,2) [] {};
		\node [style=rn] (2) at (0.6666666,2) [] {};
		\node [style=rn] (3) at (2,2) [label=above:$v$] {};
		\node [style=rn] (4) at (-1,1) [] {};
		\node [style=rn] (5) at (1,1) [] {};
		\node [style=rn] (6) at (0,0) [label=below:$r$] {};
		
		\draw (0)--(1)--(2)--(3)--(5)--(4)--(0);
		\draw (4)--(6)--(5);
		
		\draw[line width=3pt] (0)--(4)--(5)--(3);
		\draw[line width=3pt] (4)--(6)--(5);
		
		\draw[rounded corners] (-2.4, 2.75) rectangle (2.4, -0.75) {};
		\node [left,font=\normalsize] (g0) at (-2.4, 1) {$G$};
\end{tikzpicture}
		\caption{A counter example for the converse of Proposition \ref{medImpliesIsometric}.}
		\label{figConvMedImplIso}
\end{figure}
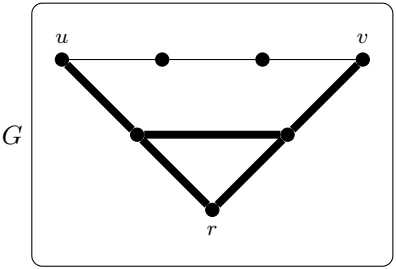

\begin{corollary}
Let $G$ be a rooted median graph with the root $r$. Every daisy graph of $G$ with respect to $r$ is isometric in $G$.
\end{corollary}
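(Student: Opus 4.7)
The plan is to observe that this corollary is an immediate consequence of Proposition \ref{medImpliesIsometric}. The only thing to check is that the hypothesis of that proposition, namely the existence of a median for every triple of the form $u, v, r$ with $u, v \in V(G)$, is automatic when $G$ is a median graph.

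Concretely, I would proceed as follows. First I would recall the definition of a median graph given in the introduction: a connected graph in which every triple of vertices admits a (unique) median. Specializing this to triples whose third vertex is the root $r$, we see that for any choice of $u, v \in V(G)$ the triple $\{u, v, r\}$ has a median. Second, I would invoke Proposition \ref{medImpliesIsometric} verbatim: since its sole hypothesis is satisfied, every daisy graph of $G$ with respect to $r$ is isometric in $G$.

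There is no real obstacle here beyond unpacking the definition of a median graph and matching it to the hypothesis of Proposition \ref{medImpliesIsometric}. The whole proof will be essentially one sentence, with no further case analysis or estimates needed. In fact, uniqueness of the median (part of the median-graph definition) is not even required, only existence; so no subtlety arises from the stronger hypothesis.
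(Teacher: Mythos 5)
Your proposal is correct and matches the paper's proof exactly: both simply note that in a median graph every triple $u,v,r$ admits a median, so the hypothesis of Proposition \ref{medImpliesIsometric} is satisfied and the conclusion follows. Your added remark that only existence (not uniqueness) of the median is needed is accurate and consistent with how the proposition is stated.
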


\begin{proof}
Since in median graphs every triple of vertices admits a (unique) median, then for two arbitrary vertices of $G$ and the root $r$ there also exists a median. By Proposition \ref{medImpliesIsometric} the assertion follows.
\end{proof}

\section{Conclusion}
In this paper we answered an open problem by Klav\v zar and Mollard proposed in \cite{KlaMol-18}. Namely, we provide a characterization of daisy cubes in terms of an expansion procedure. We also presented some interesting properties of daisy cubes and provide an $O(mn)$ time algorithm for finding a proper embedding of a daisy cube into the corresponding hypercube. Also, we proposed a generalization of daisy cubes and  provided some exciting results concerning these graphs.

Moreover, the following further investigations might be interesting.
\begin{problem}
Find a non-constructive characterization of daisy cubes.
\end{problem}

\begin{problem}\label{problem2}
Is there a faster way of finding the vertex $0^h$ of a daisy cube $Q_h(X)$ than the one provided in Algorithm \ref{algOznake}?
\end{problem}

A positive answer to Problem \ref{problem2} would give a linear time algorithm for finding a proper labelling of a graph isomorphic to a daisy cube.

\begin{problem}
Provide a recognition algorithm for daisy cubes.
\end{problem}

Proposition \ref{medImpliesIsometric} provides a condition, such that if a rooted graph $G$ with the root $r$ satisfies this condition, then every daisy graph of the graph $G$ with respect to $r$ is isometric.

\begin{problem}
Characterize rooted graphs $G$ with the root $r$ for which every daisy graph of a rooted graph $G$ with respect to $r$ is isometric.
\end{problem}

\subsection*{Acknowledgments}
This work was supported by the Slovenian Research Agency under the grants P1-0297 and J1-9109.


\begin{thebibliography}{10}
\bibitem{chepoi-88}
{\sc V. D. Chepoi}, {\em Isometric subgraphs of Hamming graphs and d-Convexity}, Cybernetics 24(1) (1988), 6-–9, https://doi.org/10.1007/BF01069520


\bibitem{HIK2011knjiga}
{\sc R.~Hammack, W.~Imrich, S.~Klav\v{z}ar}: {\em Handbook of Product
  Graphs, second edition}. CRC Press, Boca Raton, 2011.

\bibitem{K2012}
{\sc S.~Klav\v{z}ar}: {\em Structure of fibonacci cubes: a survey}. J. Comb. Optim., \textbf{25} (2013),  505--522, https://doi.org/10.1007/s10878-011-9433-z.

\bibitem{KlaMol-18}
{\sc S.~Klav\v{z}ar, M. Mollard}: {\em Daisy cubes and distance cube polynomial}, European Journal of
Combinatorics (2018), https://doi.org/10.1016/j.ejc.2018.02.019.

\bibitem{MCS2001}
{\sc E.~Munarini, C.~P. Cippo, N.~Z. Salvi}: {\em On the lucas cubes}.  Fibonacci Quart., \textbf{39} (1) (2001), 12--21.

\bibitem{T2013}
{\sc A.~Taranenko.}: 
\emph{A new characterization and a recognition algorithm of Lucas cubes.}
Discrete Math. Theor. Comput. Sci., \textbf{15} (2013), 31--39

\bibitem{V2019}
{\sc A.~Vesel}: {\em Cube-complements of generalized Fibonacci cubes}. Discrete Math., \textbf{342} (2019), 1139--1146. https://doi.org/10.1016/j.disc.2019.01.008

\bibitem{W1984}
{\sc P.~M. Winkler}: {\em Isometric embeddings in products of complete graphs}.
  Discrete Appl. Math., \textbf{7} (2) (1984), 221--225.

\bibitem{ZP2018}
{\sc P.~{\v{Z}}igert Pleter\v{s}ek}, {\em Resonance Graphs of Kinky Benzenoid Systems Are Daisy Cubes}. MATCH Commun. Math. Comput. Chem., \textbf{80} (2018),
 207--214.
\end{thebibliography}
\end{document}